\documentclass[12pt]{article}

\usepackage{latexsym,amsmath,amscd,amssymb,graphics,float}
\usepackage{enumerate}
\usepackage{multirow}
\usepackage{graphicx,lscape}

\usepackage[colorlinks]{hyperref}
\usepackage{url}

\textwidth15.8 cm
\topmargin -1.4 cm \textheight22.5 cm \oddsidemargin.4cm
\evensidemargin.4cm


\begin{document}

\newenvironment{proof}[1][Proof]{\textbf{#1.} }{\ \rule{0.5em}{0.5em}}

\newtheorem{theorem}{Theorem}[section]
\newtheorem{definition}[theorem]{Definition}
\newtheorem{lemma}[theorem]{Lemma}
\newtheorem{remark}[theorem]{Remark}
\newtheorem{proposition}[theorem]{Proposition}
\newtheorem{corollary}[theorem]{Corollary}
\newtheorem{example}[theorem]{Example}

\numberwithin{equation}{section}
\newcommand{\ep}{\varepsilon}
\newcommand{\R}{{\mathbb  R}}
\newcommand\C{{\mathbb  C}}
\newcommand\Q{{\mathbb Q}}
\newcommand\Z{{\mathbb Z}}
\newcommand{\N}{{\mathbb N}}

\newcommand{\bfi}{\bfseries\itshape}

\newsavebox{\savepar}
\newenvironment{boxit}{\begin{lrbox}{\savepar}
\begin{minipage}[b]{15.5cm}}{\end{minipage}\end{lrbox}
\fbox{\usebox{\savepar}}}

\title{{\bf On the rattleback dynamics}}
\author{R\u{a}zvan M. Tudoran and Anania G\^\i rban}

\date{}
\maketitle \makeatother

\begin{abstract}
In this paper we present some relevant dynamical properties of an idealized conservative model of the rattleback, from the Poisson dynamics point of view. In the first half of the article, along with a dynamical study of the orbits, using a Hamilton--Poisson realization of the dynamical system, we provide a geometric characterization of the space of orbits in terms of Whitney stratifications associated to the image of the energy--Casimir mapping. In the second half of the article we provide an explicit method to stabilize asymptotically any arbitrary fixed orbit/cycle of the rattleback system and to keep unchanged the geometry of the model space.
\end{abstract}

\medskip

\textbf{AMS 2010}: 70H05; 37J25; 37J35.

\textbf{Keywords}: rattleback, Hamiltonian dynamics, equilibria, periodic orbits, heteroclinic orbits, energy-Casimir mapping, Lax formulation, stability, asymptotic stabilization.

\section{Introduction}
\label{section:one}

The \textit{rattleback}, also known as the \textit{celtic stone} (or simply, the \textit{celt}), is a rigid semi-ellipsoidal (canoe shape) object which exhibits rotational preference. More precisely, when spun in one direction, after a few seconds it starts pitching (``rattles"), stops spinning, and then slowly begins spinning in the opposite direction. Surprisingly, when spun in the other direction, it seems to keep spinning steadily. The existence of objects exhibiting this unintuitive property, dates back at least to the Iron Age, as being mentioned by archeologists studying Celtic sites. Starting from 1895 with the article \cite{walker}, these objects are the subject of many research papers trying to explain scientifically their unintuitive dynamical properties (see, e.g. \cite{rattle2}, \cite{bkk}, \cite{rattle3}, \cite{rattle4}, \cite{tokieda1}, \cite{kondo}, \cite{rauch}, \cite{bm}, and the references therein). 

In this paper we continue the analysis of an idealized conservative model of the rattleback, introduced in \cite{rattle2} and then studied further in \cite{tokieda1}. As this model is governed by a 3-dimensional system of first order differential equations, from now on, we will refer to this mathematical model of the rattlebak, as the \textit{rattlebak system}. Our contribution to the study of the rattleback system consists of a complete dynamical analysis from the Hamiltonian/conservative point of view. More precisely, the structure of this article is the following. In the second section we provide a family of Hamilton-Poisson realizations of the rattleback system, parameterized by $SL(2,\mathbb{R})$. In oder to have a self contained presentation, in the next two sections we discuss two subjects already analyzed in \cite{tokieda1} (using different approaches), concerning the equilibrium states of the system and their Lyapunov stability (discussed in Section 3), and the existence of periodic orbits (discussed in Section 4). In the fifth section we present some geometric properties of the image of the energy-Casimir mapping (associated to a specific Hamilton-Poisson realization of the rattleback system), and define some naturally associated semialgebraic splittings of the image. More precisely, we discuss the relation between the image through the energy-Casimir mapping of the families of equilibrium states of the rattleback system, and the canonical Whitney stratifications of the semialgebraic splittings of the image of the energy-Casimir mapping. In the sixth section we give a complete topological classification of the fibers of the energy-Casimir mapping, classification that follows naturally from the stratifications introduced in the previous section. The seventh section is dedicated to the analysis of heteroclinic solutions of the rattleback system. In the eighth section we give a Lax formulation of the rattleback system. In the last section of this article, we provide a method to stabilize asymptotically any arbitrary fixed orbit/cycle of the rattleback system. More precisely, for an a-priori fixed orbit/cycle, $\mathcal{O}$, we construct explicitly a conservative perturbation (in the sense that the Casimir invariant, $C$, of the Poisson model space, remains a first integral for the perturbed system) which preserves the orbit/cycle $\mathcal{O}$, keeps its dynamical nature unchanged, and moreover, the orbit/cycle $\mathcal{O}$ becomes asymptotically stable (with respect to perturbations along the invariant manifold $C^{-1}(\{C(\mathcal{O})\})$) as an orbit of the perturbed dynamics. The explicit construction of the perturbations agrees with the topological classification of the orbits given in the sixth section. More precisely, for each arbitrary fixed dynamical object (i.e. stable equilibrium point, periodic orbit, heteroclinic cycle) we construct an explicit conservative perturbation of the rattleback system with the above mentioned properties. Moreover, in the case of the periodic orbits, we construct conservative perturbations leading to a stronger type of asymptotic stability, i.e. the so called asymptotic stability with phase.
\medskip

\section{Hamilton-Poisson realizations of the rattleback system}
The rattleback system analyzed in this work, was introduced in \cite{rattle2}, and is governed by the following system of differential equations
\begin{equation}\label{sys}
\left\{ \begin{array}{l}
 \dot x = \lambda xz \\
 \dot y = - yz \\
 \dot z = y^2 -\lambda x^2, \\
 \end{array} \right.
\end{equation}
where $\lambda$ is a positive real parameter. Let us recall from \cite{rattle2}, \cite{tokieda1}, the physical meaning of $(x,y,z)$ and the parameter $\lambda$. More precisely, $(x,y,z)$ represent the (pitching, rolling, spinning) modes of the motion, whereas $\lambda$ is a parameter related to the aspect ratio of the rattleback.

A first look at the system tells us that the planes $x=0$ and $y=0$ are both dynamically invariant sets of the system \eqref{sys}. As we shall see in the following, the rattleback system can be realized as a Hamilton-Poisson system outside the invariant plane $y=0$. In order to do that, let us recall from \cite{tokieda1} that the system \eqref{sys} is completely integrable. More precisely, the rattleback system admits two first integrals given by $I_1:=\dfrac{1}{2}\left(x^2 +y^2 +z^2\right)$, and $I_2:=xy^\lambda$. Due to the dimensionality of the system, there are two natural Hamilton-Poisson realizations, namely, we choose $I_1$ as the Hamiltonian of the system (and then $I_2$ generates the Poisson structure of the model space), or, we choose $I_2$ as Hamiltonian (and then $I_1$ generates the Poisson structure of the model space). The first Hamilton-Poisson realization (with the  Hamiltonian given by $I_1$) was used for the analysis of the rattleback system presented in \cite{tokieda1}. Throughout this work we focus on the second Hamilton-Poisson realization  (with the Hamiltonian given by $I_2)$. 

Before stating the first result of this section, for the sake of completeness, we briefly recall some generalities regarding 3D  Hamilton-Poisson systems. In order to do that, let us fix $\Omega\subseteq\mathbb{R}^{3}$ an open subset, and let $C\in {\mathcal{C}}^\infty(\Omega,\mathbb{R})$ be a smooth function. These data provide a Poisson structure on $\Omega$, generated by the Poisson bracket $\{f,g\}_{C}:=\langle\nabla C,\nabla f\times\nabla g \rangle, ~\forall f,g\in C^\infty(\Omega,\mathbb{R})$, where $\langle\cdot,\cdot\rangle$ stands for the canonical inner product on $\mathbb{R}^3$. More precisely, the matrix formulation of the associated Poisson structure, $\Pi_{C}$, is given by $\Pi_{C}:=[\{\pi_i ,\pi_j\}_{C}]_{1\leq i,j\leq 3}$, where $\pi_1, \pi_2, \pi_3 \in C^\infty(\Omega,\mathbb{R})$ stand for the coordinate functions on $\Omega$, i.e. for $i\in\{1,2,3\}$, $\pi_{i}:\Omega\rightarrow \mathbb{R}$, $\pi_i (x_1,x_2,x_3):=x_i, ~\forall(x_1,x_2,x_3)\in\Omega$. Moreover, any arbitrary fixed nonvanishing smooth function, $\nu\in {\mathcal{C}}^\infty(\Omega,\mathbb{R})$, defines a new Poisson structure on $\Omega$, given by $\nu\Pi_C$. Recall that on the Poisson manifold $(\Omega,\nu \Pi_{C})$, the Hamiltonian vector field generated by a smooth function $H\in C^\infty(\Omega,\mathbb{R})$, is given by $X_H :=\nu\Pi_C \nabla H$, where $\nabla H$ stands for the gradient vector field with respect to the canonical inner product on $\mathbb{R}^3$. Moreover, as the Lie derivative along $X_H$ is given by $\mathcal{L}_{X_{H}}(f)=\nu\{f,H\}_{C}, ~\forall f\in C^\infty(\Omega,\mathbb{R})$, it follows that $H$ is a first integral of $X_H$. Regarding first integrals, note that $C$ is a Casimir invariant of the Poisson manifold $(\Omega,\nu\Pi_{C})$, i.e. $\{C,f\}_{C}=0, ~ \forall f\in C^\infty(\Omega,\mathbb{R})$. Thus, $C$ is a first integral for every Hamiltonian vector field defined on the Poisson manifold $(\Omega,\nu\Pi_{C})$. Consequently, any Hamiltonian vector field $X_{H}$ defined on the Poisson manifold $(\Omega,\nu \Pi_{C})$, admits $H$ and $C$ as first integrals. Conversely, any 3D system of first order differential equations which admits two first integrals, $I_1, I_2$, can be realized locally as a Hamilton-Poisson system of the above mentioned type, choosing $(C,H)\in\{(I_1,I_2),(I_2,I_1)\}$ and then, in each case apart, determining the corresponding rescaling function $\nu$ (for details regarding Hamiltonian systems, see e.g. \cite{tudoranPO}, \cite{marsdenratiu}, \cite{cushman}, \cite{ratiu}).

Let us present now the Hamilton-Poisson realization of the system \eqref{sys}, associated with the identification $(C,H)=(I_1, I_2)$. This realization, also mentioned in the Appendix of \cite{tokieda1}, provides the geometric framework for the present approach of the rattleback system. In order to keep unchanged the maximal phase space of the Hamilton-Poisson realization for all positive values of $\lambda$, from now on, we assume for the sake of simplicity that $\lambda>1$ is a natural number.
\begin{proposition}(\cite{tokieda1})\label{t22}
The rattleback system \eqref{sys} admits the following Hamilton-Poisson
realization
\begin{equation}\label{hc}
(\Omega,\nu\Pi_{C},H),
\end{equation}
where, $\Omega:=\{(x,y,z)\in\mathbb{R}^3 \mid y\neq 0\}$, $\nu\in {\mathcal{C}}^\infty(\Omega,\mathbb{R})$, $\nu:=y^{1-\lambda}$, is the rescaling function,
$$\Pi_{C}(x,y,z):=\left[ {\begin{array}{*{20}c}
   0 & z & -y \\
   - z & 0 & x  \\
    y& - x  & 0  \\
\end{array}} \right], ~ \forall (x,y,z)\in\Omega,$$
is the Poisson structure generated by $C:=\dfrac{1}{2}\left(x^2 +y^2 +z^2\right)$, and $H\in {\mathcal{C}}^\infty(\Omega,\mathbb{R})$ given by $H:=xy^\lambda$, is the Hamiltonian function.
\end{proposition}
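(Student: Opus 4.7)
The statement is a direct verification, and my plan is to reduce everything to the general setup recalled in the paragraph preceding the proposition and then compute the Hamiltonian vector field $X_H=\nu\Pi_C\nabla H$ component-by-component, checking that the result coincides with the right-hand side of \eqref{sys}.

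First, I would observe that nothing has to be proved regarding the Poisson structure or its Casimir: by the general construction recalled above, the bracket $\{f,g\}_C=\langle\nabla C,\nabla f\times\nabla g\rangle$ is a Poisson bracket on any open subset of $\mathbb{R}^3$, and any nonvanishing smooth rescaling $\nu$ yields again a Poisson structure $\nu\Pi_C$ for which $C$ is automatically a Casimir. The choice $\Omega=\{y\neq 0\}$ is made precisely so that $\nu=y^{1-\lambda}$ (and, for $\lambda\in\mathbb{N}$ with $\lambda>1$, also the monomial $H=xy^\lambda$ viewed as a function whose flow we compute via $\nu$) is smooth and nonvanishing on $\Omega$. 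This reduces the proposition to the single identity $X_H=\nu\Pi_C\nabla H = (\lambda xz,\,-yz,\,y^2-\lambda x^2)^\top$.

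Next, I would compute $\nabla H=(y^\lambda,\,\lambda x y^{\lambda-1},\,0)^\top$ and multiply by $\Pi_C$ to get
\[
\Pi_C\nabla H=\begin{pmatrix} 0 & z & -y \\ -z & 0 & x \\ y & -x & 0 \end{pmatrix}\begin{pmatrix} y^\lambda \\ \lambda x y^{\lambda-1} \\ 0 \end{pmatrix}=\begin{pmatrix} \lambda xz\, y^{\lambda-1} \\ -z\,y^\lambda \\ y^{\lambda+1}-\lambda x^2 y^{\lambda-1}\end{pmatrix}.
\]
Multiplying by the scalar $\nu=y^{1-\lambda}$ then gives the three components $\lambda xz$, $-yz$, and $y^2-\lambda x^2$, which is exactly the right-hand side of \eqref{sys}.

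There is no real obstacle here; the only point that deserves a comment is the implicit assumption $\lambda\in\mathbb{N}$, $\lambda>1$, stated just before the proposition, which ensures that both $H=xy^\lambda$ and the rescaling $\nu=y^{1-\lambda}$ are globally smooth on $\Omega$ (so that the maximal phase space is indeed all of $\Omega$, independently of the sign of $y$). Under this convention the calculation above constitutes a complete proof.
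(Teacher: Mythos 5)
Your computation is exactly the paper's proof: both verify the single identity $\nu\Pi_C\nabla H=(\lambda xz,\,-yz,\,y^2-\lambda x^2)^\top$ by direct matrix multiplication, and your arithmetic checks out. The surrounding remarks about $C$ being automatically a Casimir and about the role of the hypothesis $\lambda\in\mathbb{N}$, $\lambda>1$, are consistent with the discussion the paper gives just before the proposition, so the proposal is correct and essentially identical in approach.
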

\noindent \begin{proof} The conclusion follows directly from the following equalities, valid for all $(x,y,z)\in\Omega$:
\begin{align*}
\nu(x,y,z)\Pi_{C}(x,y,z)\cdot\nabla H(x,y,z)&=y^{1-\lambda}\cdot
\left[ {\begin{array}{*{20}c}
   0 & {z} & {-y}  \\
   {-z} & 0 & {x}  \\
   {y} & {-x} & 0  \\
\end{array}} \right]\cdot
\left[ {\begin{array}{*{20}c}
   {y^{\lambda}} \\
   {\lambda x y^{\lambda -1}}  \\
   {0} \\
\end{array}} \right]=
\left[ {\begin{array}{*{20}l}
   {\lambda xz} \\
   {-yz}  \\
   {y^2 -\lambda x^2} \\
\end{array}} \right]\\
&=\left[ {\begin{array}{*{20}r}
   {\dot x} \\
   {\dot y}  \\
   {\dot z} \\
\end{array}} \right].
\end{align*}
\end{proof}
\medskip

Next result provides a whole family of Hamilton-Poisson realizations of the rattleback system \eqref{sys}, parametrized by $SL(2,\mathbb{R})$, such that the Hamilton-Poisson realization given in Proposition \ref{t22} corresponds to the identity matrix $I_2$.
\begin{theorem}
The rattleback system \eqref{sys} admits a family of Hamilton-Poisson realizations parameterized by the group $SL(2,\R)$. More precisely,
$(\Omega,\nu\{\cdot,\cdot\}_{a,b},H_{c,d})$ is a Hamilton-Poisson realization of the dynamics \eqref{sys}, where $\Omega=\{(x,y,z)\in\mathbb{R}^3 \mid y\neq 0\}$, $\nu\in {\mathcal{C}}^\infty(\Omega,\mathbb{R})$, $\nu=y^{1-\lambda}$, is the rescaling function,
$$\{f,g\}_{a,b}:=\langle\nabla C_{a,b},\nabla f\times\nabla g \rangle, ~ \forall f,g\in {\mathcal{C}}^\infty(\Omega,\mathbb{R}),$$
the smooth functions $C_{a,b}, H_{c,d}$ are given by
\begin{align*}
C_{a,b}(x,y,z):&=\dfrac{a}{2}(x^2 +y^2+ z^2)+b xy^{\lambda},\\
H_{c,d}(x,y,z):&=\dfrac{c}{2}(x^2 +y^2 +z^2)+d xy^{\lambda}, ~ \forall (x,y,z)\in\Omega,
\end{align*} and $\left[ {\begin{array}{*{20}c}
   a & b  \\
   c & d  \\
\end{array}} \right]\in SL(2,\mathbb{R}).$
\end{theorem}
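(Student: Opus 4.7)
The key observation is that both $C_{a,b}$ and $H_{c,d}$ are just invertible linear combinations of the two known first integrals $I_{1}=\frac{1}{2}(x^{2}+y^{2}+z^{2})$ and $I_{2}=xy^{\lambda}$, with mixing matrix $\begin{pmatrix}a&b\\c&d\end{pmatrix}\in SL(2,\mathbb{R})$. So the plan is to reduce everything to Proposition \ref{t22} by exploiting this linearity together with the alternating bilinearity of the cross product. Nothing here should be surprising; the non-trivial input is the $SL(2,\mathbb{R})$-condition $ad-bc=1$, which is precisely what is needed to land back on the same vector field.

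First I would check that $\nu\{\cdot,\cdot\}_{a,b}$ is a bona-fide Poisson bracket on $\Omega$. This requires no separate computation: the section's introductory paragraph already records that for any smooth $F$ on $\Omega$, the triple-product bracket $\{f,g\}_{F}=\langle\nabla F,\nabla f\times\nabla g\rangle$ defines a Poisson structure, and that multiplying by any nonvanishing smooth function $\nu$ yields another Poisson structure on the same open set. Apply this general fact with $F:=C_{a,b}$ and $\nu:=y^{1-\lambda}$.

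Next I would identify the Hamiltonian vector field $X_{H_{c,d}}:=\nu\,\Pi_{C_{a,b}}\nabla H_{c,d}$ with $\nu\,(\nabla H_{c,d}\times\nabla C_{a,b})$. This is just the coordinate-free reading of the matrix $\Pi_{C}$: for each $i$,
\[
(\Pi_{C_{a,b}}\nabla H_{c,d})_{i}=\{x_{i},H_{c,d}\}_{a,b}=\langle\nabla C_{a,b},\,e_{i}\times\nabla H_{c,d}\rangle=e_{i}\cdot(\nabla H_{c,d}\times\nabla C_{a,b}),
\]
using the cyclic property of the scalar triple product. Now I would substitute the linear decompositions $\nabla C_{a,b}=a\nabla I_{1}+b\nabla I_{2}$ and $\nabla H_{c,d}=c\nabla I_{1}+d\nabla I_{2}$ and expand:
\[
\nabla H_{c,d}\times\nabla C_{a,b}=(bc-ad)\,\nabla I_{1}\times\nabla I_{2}=\nabla I_{2}\times\nabla I_{1},
\]
where the last equality uses $ad-bc=1$. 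Therefore $X_{H_{c,d}}=\nu\,(\nabla I_{2}\times\nabla I_{1})$, which by Proposition \ref{t22} (with $C=I_{1}$, $H=I_{2}$) is exactly the right-hand side of the rattleback system \eqref{sys}.

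There is no genuine obstacle here; the only thing to be careful about is bookkeeping of signs in the antisymmetric expansion, so that the determinant $ad-bc$ appears with the correct sign and the $SL(2,\mathbb{R})$-normalization really does collapse the computation onto the base case. Once that is settled, the conclusion follows in a single line from Proposition \ref{t22}.
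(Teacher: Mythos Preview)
Your argument is correct. Both you and the paper verify the claim by direct computation, but the presentations differ. The paper simply writes out the explicit matrix $\Pi_{a,b}$ of the Poisson structure generated by $C_{a,b}$ and asserts that the verification is then immediate (leaving the actual multiplication $\nu\,\Pi_{a,b}\nabla H_{c,d}$ to the reader). You instead work coordinate-free, identifying $\nu\,\Pi_{C_{a,b}}\nabla H_{c,d}$ with $\nu\,(\nabla H_{c,d}\times\nabla C_{a,b})$ via the scalar triple product, and then expanding bilinearly in terms of $\nabla I_{1}$ and $\nabla I_{2}$. Your route has the advantage that it makes completely transparent \emph{why} the $SL(2,\mathbb{R})$ condition $ad-bc=1$ is exactly what is needed: it is the coefficient that survives the antisymmetric expansion of the cross product. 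The paper's explicit matrix formula, on the other hand, records the Poisson tensor itself, which could be useful for later computations (e.g.\ reading off Casimirs or symplectic leaves), but obscures the reason for the determinant normalization.
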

\noindent \begin{proof} The conclusion follows directly taking
into account that the matrix formulation of the Poisson structure generated by the bracket
$\{\cdot,\cdot\}_{a,b}$, is given in coordinates by
$$\Pi_{a,b}(x,y,z)=
\left[ {\begin{array}{*{20}c}
   0 & {a z} & -a y -\lambda bxy^{\lambda -1} \\
   {-az} & 0 & {ax+by^{\lambda}}  \\
   {a y +\lambda bxy^{\lambda -1}} & -ax-by^{\lambda} & 0
\end{array}} \right], ~ \forall (x,y,z)\in\Omega.$$
\end{proof}
\medskip

\section{Equilibrium states and their stability analysis}

In this short section we analyze the stability properties of the equilibrium states of the rattleback system \eqref{sys}. The same conclusions were also obtained in \cite{tokieda1}, using a different approach. 

Before stating the result of this section, let us give a characterization of the set of equilibrium states of the rattleback system \eqref{sys}.

\begin{remark}
The equilibrium states of the system \eqref{sys} are the elements of the set
$${\mathcal E}:=\{(M,- M\sqrt{\lambda},0):M\in\mathbb{R}\}\cup\{(M,M\sqrt{\lambda},0):M\in\mathbb{R}\}\cup \{(0,0,M):M\in\mathbb{R}\},$$ illustrated in the Fig. \ref{figech}.
\end{remark}

\begin{figure}[H]
\vspace*{25pt}
\centering
\begin{tabular}{|c|}
\hline
\scalebox{0.9}{\includegraphics*{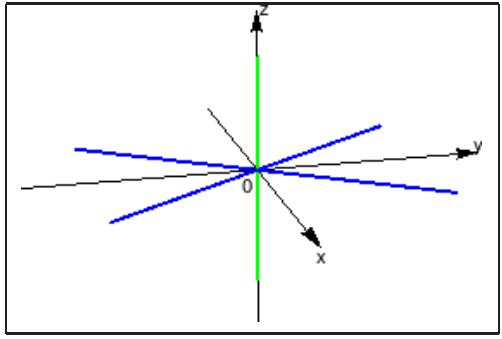}}\\
\hline
\end{tabular}
\caption{\it The equilibrium states of the rattleback system \eqref{sys}.}\label{figech}
\end{figure}

In the following theorem we describe the stability properties of the equilibrium states of the system \eqref{sys}.
\begin{theorem}\label{stability}
The equilibrium states $e^{M}_{\pm}:=(M,\pm M\sqrt{\lambda},0)$, $M\in\mathbb{R}$, are Lyapunov stable, whereas the equilibrium states $e^{M}_3 :=(0,0,M)$, $M\in\mathbb{R}\setminus\{0\}$, are unstable.
\end{theorem}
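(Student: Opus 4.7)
The plan is to split the statement along the two families: for $e^M_3=(0,0,M)$ with $M\neq 0$ I will use the spectrum of the linearization, while for $e^M_{\pm}=(M,\pm M\sqrt\lambda,0)$ I will invoke the Arnold/energy-Casimir method built on the conserved pair $(H,C)$ supplied by Proposition~\ref{t22}, with the point $M=0$ (the origin, common to both families) treated separately via the first integral $C$ itself.

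For the instability of $e^M_3$ I will compute the Jacobian of the right-hand side of \eqref{sys} and observe that it is diagonal with entries $(\lambda M,-M,0)$. Since $\lambda>0$, one of $\lambda M$ or $-M$ is strictly positive whenever $M\neq 0$, so the linearization has an eigenvalue with positive real part and Lyapunov's instability theorem applies. This is the easy half.

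The stability of $e^M_{\pm}$ with $M\neq 0$ is the substantive step. I will form $F_k:=H+kC=xy^\lambda+\tfrac{k}{2}(x^2+y^2+z^2)$, which is a first integral of \eqref{sys} for every $k\in\mathbb R$ because both $H$ and $C$ are conserved (here $C$ is a Casimir of $\Pi_C$, hence invariant under $X_H$). Writing $\nabla F_k=0$ at $(M,\pm M\sqrt\lambda,0)$ gives two equations in $k$ (the third is automatic since $z=0$) which I expect to collapse to the single condition $k=-(\pm 1)^\lambda\lambda^{\lambda/2}M^{\lambda-1}$; verifying this collapse is the first concrete computation. With this $k$, I will compute $d^2F_k(e^M_{\pm})$ and restrict it to the tangent space $\ker dC(e^M_{\pm})$, which for $M\neq 0$ is spanned by $(\mp\sqrt\lambda,1,0)$ and $(0,0,1)$. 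The restricted quadratic form should come out diagonal, with both diagonal entries proportional to $-(\pm 1)^\lambda M^{\lambda-1}$, hence sign-definite; by Arnold's energy-Casimir criterion the equilibrium is then Lyapunov stable (the common level set of $H$ and $C$ through $e^M_{\pm}$ is a compact topological circle nearby, see the leaf picture discussed later in the paper). This Hessian reduction is where I expect any sign-tracking slip to occur, and will be the main obstacle.

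For the remaining boundary case $M=0$ (the origin), the Hamilton--Poisson realization of Proposition~\ref{t22} is not defined since the origin lies outside $\Omega=\{y\neq 0\}$. However $C(x,y,z)=\tfrac12(x^2+y^2+z^2)$ is a first integral of \eqref{sys} on all of $\mathbb R^3$ (direct check), and it achieves a strict global minimum at the origin, so Lyapunov's classical stability theorem applied to $C$ as a Lyapunov function concludes the argument. Collecting the three cases gives exactly the statement of Theorem~\ref{stability}.
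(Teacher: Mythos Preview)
Your proposal is correct and follows essentially the same strategy as the paper: spectral instability for $e^M_3$, the Arnold/energy--Casimir criterion for $e^M_{\pm}$ with $M\neq 0$, and $C$ as a Lyapunov function at the origin. The only cosmetic differences are that the paper takes $F_\mu=C-\mu H$ (so that the restricted Hessian comes out as $\operatorname{diag}[2(\lambda+1),1]$, manifestly positive definite) whereas your $F_k=H+kC$ yields $k\cdot\operatorname{diag}[2(\lambda+1),1]$, merely sign-definite; and the paper restricts to $\ker dH$ while you restrict to $\ker dC$, but at $e^M_{\pm}$ these subspaces coincide since $\nabla H$ and $\nabla C$ are parallel there.
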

\noindent \begin{proof}
In order to study the Lyapunov stability of the equilibria $e^{M}_{\pm}$, for $M\neq 0$, we use the Arnold stability criterion \cite{arnold}. Considering the test function $F_\mu:=C-\mu H$, where $\mu\in\mathbb{R}$ is a real parameter, we get that
\begin{itemize}
\item[(i)] ${\rm\bf d}F_\mu(e^{M}_{\pm})=0$ iff $\mu=\mu_{\pm}:=\dfrac{(\pm 1)^{\lambda}}{M^{\lambda -1}\cdot(\sqrt{\lambda})^{\lambda}}$,

 \item[(ii)] $W_{e^{M}_{\pm}}:={\rm  ker}\ {\rm\bf d}H(e^{M}_{\pm})={\rm span}_\mathbb{R}\{(\mp \sqrt{\lambda},1,0),\left(0,0,1\right)\}$,

 \item[(iii)] $\left.{\rm\bf d}^2F_{\mu_{\pm}}(e^{M}_{\pm})\right|_{W_{e^{M}_{\pm}} \times W_{e^{M}_{\pm}}}={\rm diag}\left[2(\lambda +1),1\right]$ is positive definite, since $\lambda>0$.
\end{itemize}

Thus, from the Arnold stability criterion we conclude that for $M\neq 0$, the equilibrium states $e^{M}_{\pm}$ are Lyapunov stable.

For $M=0$, the associated equilibrium state is the origin, which is Lyapunov stable since the Casimir function $C=\dfrac{1}{2}\left(x^2 +y^2 +z^2\right)$ is obviously an associated Lyapunov function.

The instability of the equilibrium states of type $e^{M}_3$, $M\neq 0$, follows directly from the spectral analysis of the linearization of the rattleback system \eqref{sys}. More precisely, for a fixed $M\neq 0$, the spectrum of the linearized dynamics about $e^{M}_3$, consists of the eigenvalues $\{0,-M,\lambda M\}$. As $\lambda >0$ and $M\neq 0$, one of the eigenvalues is strictly positive, and consequently the equilibrium $e^{M}_3$ is unstable.
\end{proof}
\medskip

\section{The local existence of periodic orbits}

In this section we analyze the existence of periodic orbits of the rattleback system \eqref{sys}. In order to do that, let us recall an important consequence of the Hamiltonian nature of the rattleback system \eqref{sys}. More precisely, due to the Hamiltonian nature of the system, its dynamic takes place on the symplectic leaves of the Poisson configuration manifold. Thus, in order to detect the existence of periodic orbits of the rattleback system, a natural approach consists in restricting the system to an arbitrary fixed (regular) symplectic leaf, and then proving the existence of periodic orbits of the restricted system. As the restricted system becomes a symplectic Hamiltonian system, our approach is based on the Lyapunov Center Theorem. Using a different approach, the existence of periodic orbits of the rattleback system was also analyzed in \cite{tokieda1}.

Let us start now our analysis. Using the notations from the previous section, we restrict the rattleback system \eqref{sys} to a (regular) symplectic leaf (of the Poisson manifold $(\Omega,\nu\Pi_{C})$) that contains a Lyapunov stable equilibrium, and then we apply the Lyapunov Center Theorem, in order to get the existence of periodic orbits for the associated dynamics.

More precisely, using Theorem \ref{stability}, we search for periodic orbits located on (regular) symplectic leaves which contain equilibria of type $e^{|M|}_{\pm}:=(|M|,\pm |M|\sqrt{\lambda},0)$, with $M\neq 0$. In order to do that, note that all syplectic leaves of the Poisson manifold $(\Omega,\nu\Pi_{C})$ are regular, and consists of the connected components of $\Omega\cap C^{-1}(\{c\})$, for all $c>0$. Particularly, the symplectic leaves containing $e^{|M|}_{-}$, and respectively $e^{|M|}_{+}$, are given by the open hemispheres
\begin{align*}
\mathcal{O}_{e^{|M|}_{-}}:&=\left\{(x,y,z)\in\Omega \mid x^2 + y^2 + z^2 = (\lambda +1) M^2, ~ y<0 \right\},\\
\mathcal{O}_{e^{|M|}_{+}}:&=\left\{(x,y,z)\in\Omega \mid x^2 + y^2 + z^2 = (\lambda +1) M^2, ~ y>0 \right\}.
\end{align*}
Denoting by ${\mathcal{O}}_{e^{|M|}_{\pm}}:=\mathcal{O}_{e^{|M|}_{-}} \cup \mathcal{O}_{e^{|M|}_{+}}$, the union of the syplectic leaves given above, some straightforward computations imply that the rattleback system \eqref{sys} restricted to ${\mathcal{O}}_{e^{|M|}_{\pm}}$, has the characteristic polynomial associated with the linearization at $e^{|M|}_{\pm}$, given by
$$ p_{e^{|M|}_{\pm}} (\mu ) = -\left[\mu ^2 +2M^2 \lambda (\lambda +1)\right].$$
As $\lambda >0$ and $M\neq 0$, the roots of $ p_{e^{|M|}_{\pm}}$ are $\mu_\pm=\pm i |M| \sqrt{2\lambda (\lambda +1)}\in i\mathbb{R}\setminus\{0\}$. Consequently, applying Lyapunov's Center Theorem for the system \eqref{sys} restricted to $\mathcal{O}_{e^{|M|}_{-}}$, and respectively $\mathcal{O}_{e^{|M|}_{+}}$, we get that around $e^{|M|}_{-}$, and respectively $e^{|M|}_{+}$, the union of the corresponding symplectic leaves, ${\mathcal O}_{e^{|M|}_{\pm}}$, is locally foliated by periodic orbits of the restricted system. Thus, as the symplectic leaves are dynamically invariant, these periodic orbits are also periodic orbits for the system \eqref{sys}. Summarizing, we have proved the following theorem.

\begin{theorem}
Let $e^{|M|}_{\pm}:=(|M|,\pm |M|\sqrt{\lambda},0)$, $M\neq 0$, be a pair of symmetric Lyapunov stable equilibrium states of the rattleback system \eqref{sys}. Then there exist $\varepsilon_{0}>0$ and two one-parameter families $\left\{ \gamma _\varepsilon ^{e^{|M|}_{-}}  \right\}_{0 < \varepsilon  \leq \varepsilon_{0}}$, $\left\{ \gamma _\varepsilon ^{e^{|M|}_{+}}  \right\}_{0 < \varepsilon  \leq \varepsilon_{0}}\subset {\mathcal{O}}_{e^{|M|}_{\pm}}$ of periodic orbits that approaches $e^{|M|}_{-}$, and respectively $e^{|M|}_{+}$, as $\varepsilon\to 0$, with periods
$T_\varepsilon ^{e^{|M|}_{-}}=T_\varepsilon ^{e^{|M|}_{+}} \mathop{\longrightarrow}\limits^{\varepsilon\to 0}\dfrac{\pi\sqrt{2}}{|M|{\sqrt{\lambda(\lambda +1)}}}$. Moreover, the sets
$$\{e^{|M|}_{-} \}  \cup \bigcup\limits_{0 < \varepsilon \leq \varepsilon_{0}  } {\gamma _\varepsilon ^{e^{|M|}_{-}}}, ~~~~ \{e^{|M|}_{+} \}  \cup \bigcup\limits_{0 < \varepsilon \leq \varepsilon_{0}  } {\gamma _\varepsilon ^{e^{|M|}_{+}}},$$
are related through the symmetry about the invariant plane $y=0$, each of them being a smooth two dimensional manifold with boundary ($\gamma _{\varepsilon_{0}}^{e^{|M|}_{-}}$, and respectively $\gamma _{\varepsilon_{0}}^{e^{|M|}_{+}}$), diffeomorphic to the closed unit disk of $\mathbb{R}^2$.
\end{theorem}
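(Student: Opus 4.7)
The strategy is the one already sketched before the statement: reduce \eqref{sys} to a two--dimensional symplectic Hamiltonian system on the regular symplectic leaf through $e^{|M|}_{\pm}$, and then apply the Lyapunov Center Theorem.

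First I would identify the relevant leaves. Since $C=\tfrac12(x^2+y^2+z^2)$ is a Casimir of $(\Omega,\nu\Pi_C)$ and $\{y=0\}$ is dynamically invariant, the symplectic leaf through $e^{|M|}_{\pm}$ is the connected component of $\Omega\cap C^{-1}(\{(\lambda+1)M^2/2\})$ containing that point, i.e.\ the open hemisphere $\mathcal{O}_{e^{|M|}_{\pm}}$ displayed in the text. Restricting \eqref{sys} to this 2--manifold gives a genuine symplectic Hamiltonian system with Hamiltonian $H|_{\mathcal{O}_{e^{|M|}_{\pm}}}$.

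Next I would linearize the restricted system at $e^{|M|}_{\pm}$ in a local chart on the hemisphere, e.g.\ using $(x,z)\mapsto(x,\pm\sqrt{(\lambda+1)M^2-x^2-z^2},z)$. As recorded in the discussion, the characteristic polynomial is $p_{e^{|M|}_{\pm}}(\mu)=-[\mu^2+2M^2\lambda(\lambda+1)]$, so the eigenvalues $\pm i|M|\sqrt{2\lambda(\lambda+1)}$ are nonzero and purely imaginary. The Lyapunov Center Theorem then yields $\varepsilon_0>0$ and a one--parameter family of periodic orbits $\gamma_\varepsilon^{e^{|M|}_{\pm}}\subset\mathcal{O}_{e^{|M|}_{\pm}}$ accumulating on the equilibrium as $\varepsilon\to 0$, with period converging to $2\pi/(|M|\sqrt{2\lambda(\lambda+1)})=\pi\sqrt{2}/(|M|\sqrt{\lambda(\lambda+1)})$. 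Since each leaf is dynamically invariant, these are also periodic orbits of the full system \eqref{sys}.

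For the ``moreover'' statement, a direct inspection of \eqref{sys} shows that the involution $(x,y,z)\mapsto(x,-y,z)$ preserves the vector field, and it exchanges the two open hemispheres, hence maps the family on $\mathcal{O}_{e^{|M|}_{-}}$ onto the family on $\mathcal{O}_{e^{|M|}_{+}}$. For the disk structure I would appeal to the Arnold--type argument used in Theorem \ref{stability}: the test function $F_{\mu_\pm}=C-\mu_\pm H$ restricts to the leaf as a Morse function with a nondegenerate definite critical point at $e^{|M|}_{\pm}$, so by the Morse lemma its sublevel sets near the critical point form a nested family of closed topological disks whose boundary circles coincide with the Lyapunov periodic orbits. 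Shrinking $\varepsilon_0$ if necessary to keep this Morse picture valid throughout, one recovers the claimed smooth closed disk with boundary $\gamma_{\varepsilon_0}^{e^{|M|}_{\pm}}$. The main (if mild) obstacle is precisely this last matching between the one--parameter Lyapunov family and the Morse sublevel sets of $F_{\mu_\pm}|_{\mathcal{O}_{e^{|M|}_{\pm}}}$, so that the disk structure and its boundary orbit are obtained coherently.
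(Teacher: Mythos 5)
Your proposal follows the paper's own route: restrict \eqref{sys} to the regular symplectic leaves (the open hemispheres $\mathcal{O}_{e^{|M|}_{\pm}}$), apply the Lyapunov Center Theorem at $e^{|M|}_{\pm}$ using the purely imaginary nonzero spectrum $\pm i|M|\sqrt{2\lambda(\lambda+1)}$, and transport the conclusion across the invariant plane $y=0$ via the symmetry $(x,y,z)\mapsto(x,-y,z)$. The only (harmless) divergence is at the end: the paper reads the closed-disk structure directly off the conclusion of the Lyapunov Center Theorem and \emph{defines} $\gamma_\varepsilon^{e^{|M|}_{+}}$ as the reflection of $\gamma_\varepsilon^{e^{|M|}_{-}}$, whereas you construct both families independently and then justify the disk via the Morse lemma for $F_{\mu_{\pm}}$ restricted to the leaf --- a valid substitute, since on a two-dimensional leaf the Lyapunov family is precisely the family of level circles of $H$ near the nondegenerate extremum, which also supplies the local uniqueness needed to identify the reflected family with the one produced by the theorem on the other hemisphere.
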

\begin{proof}
By Lyapunov's Center Theorem there exist $\varepsilon_{0}>0$ and a one-parameter family $\left\{ \gamma _\varepsilon ^{e^{|M|}_{-}}  \right\}_{0 < \varepsilon  \leq \varepsilon_{0}}\subset \mathcal{O}_{e^{|M|}_{-}}$ of periodic orbits of the rattleback system, that approaches $e^{|M|}_{-}$ as $\varepsilon\to 0$, with periods
$T_\varepsilon ^{e^{|M|}_{-}}\mathop{\longrightarrow}\limits^{\varepsilon\to 0}\dfrac{2\pi}{|M| \sqrt{2\lambda (\lambda +1)}}=\dfrac{\pi\sqrt{2}}{|M|{\sqrt{\lambda(\lambda +1)}}}$. Moreover, the set
$$\{e^{|M|}_{-} \}  \cup \bigcup\limits_{0 < \varepsilon \leq \varepsilon_{0}  } {\gamma _\varepsilon ^{e^{|M|}_{-}}}$$
is a smooth two dimensional manifold with boundary $\gamma _{\varepsilon_{0}}^{e^{|M|}_{-}}$, diffeomorphic to the closed unit disk of $\mathbb{R}^2$.

In order to construct the periodic orbits $\left\{ \gamma _\varepsilon ^{e^{|M|}_{+}}  \right\}_{0 < \varepsilon  \leq \varepsilon_{0}}\subset \mathcal{O}_{e^{|M|}_{+}}$, for each $\varepsilon \in (0, \varepsilon_{0}]$ we denote by $\gamma _\varepsilon ^{e^{|M|}_{+}}$, the symmetry of $\gamma _\varepsilon ^{e^{|M|}_{-}}$ about the invariant plane $y=0$. The rest of the proof is a direct consequence of the fact that $t \mapsto (x(t),y(t),z(t))$ is a ($T-$periodic) solution of the rattleback system \eqref{sys} if and only if $t \mapsto (x(t),-y(t),z(t))$ is a ($T-$periodic) solution.
\end{proof}
\medskip

\section{The image of the energy-Casimir mapping}

The aim of this section is to study the image of the so called energy-Casimir mapping, $\operatorname{Im}(\mathcal{EC})$, naturally associated to the Hamilton-Poisson realization \eqref{hc} of the rattleback system \eqref{sys}. More precisely, we analyze geometric properties of $\operatorname{Im}(\mathcal{EC})$, as well as a semialgebraic splitting of the image. Recall that by semialgebraic splitting of the image, we mean a splitting consisting of semialgebraic manifolds, i.e. manifolds that are described in coordinates by a set of algebraic equalities and inequalities. The results presented here, will be used in the next section, in order to provide a complete topological classification of the orbits of the rattleback system \eqref{sys}.

Let us introduce now the main protagonist of this section, namely, the energy-Casimir mapping, ${\mathcal EC}\in C^\infty(\R^3,\R^2)$,
$${\mathcal EC}(x,y,z):=(H(x,y,z), C(x,y,z)),~ \forall (x,y,z)\in\mathbb{R}^3,$$
where $H,C\in C^\infty(\R^3,\R)$ are given by $H:=xy^{\lambda}$, and $C:=\dfrac{1}{2}(x^2 +y^2 +z^2)$.

Note that the functions $H$ and $C$ given above, are the extensions to $\mathbb{R}^3$ of the Hamiltonian/energy of the system \eqref{hc}, and respectively the Casimir $C$ of the Poisson manifold $(\Omega,\nu\Pi_C)$, as introduced in Theorem \ref{t22}. Recall that even if both the Hamiltonian and the Casimir invariant, are globally defined functions, in order to realize the rattleback system as a Hamilton-Poisson system, a rescaling function was needed, whose maximal domain of definition is $\Omega$, a proper subset of $\mathbb{R}^3$ (given as the complement with respect to $\mathbb{R}^3$ of the invariant plane $y=0$). Summarizing, despite of the terminology, the so called energy-Casimir mapping, is actually related more to the complete integrability of the rattleback system than to the Hamiltonian nature of the dynamics. This is due to the fact that in dimension three, any Hamilton-Poisson system is also completely integrable, whereas the converse implication generally holds true \textit{only} locally (see \cite{tudoranPO}), as is the case here, for the rattleback system.

Next proposition provides an explicit semialgebraic splitting of the image of the energy-Casimir map ${\mathcal EC}$.

\begin{proposition}\label{p51}
 The image of the energy-Casimir map admits the splitting $$Im(\mathcal EC)=S^{-}\cup S^{+},$$ where the subsets $S^{-}, S^{+}\subset\mathbb{R}^2$ are splitting further as a union of semialgebraic manifolds, as follows:
\begin{align*}
 S^{-}&:=\left\{(h,c)\in\mathbb{R}^2:~ h^2 =\lambda^{\lambda}\cdot \left(\dfrac{2}{\lambda +1}\right)^{\lambda +1}\cdot c^{\lambda +1};~ h< 0;~ c>  0\right\}\\
&\bigcup\{(0,0)\}\bigcup\left\{(h,c)\in\mathbb{R}^2:~ h= 0;~ c>0 \right\}\\
&\bigcup\left\{(h,c)\in\mathbb{R}^2:~ h^2 < \lambda^{\lambda}\cdot \left(\dfrac{2}{\lambda +1}\right)^{\lambda +1}\cdot c^{\lambda +1};~ h< 0;~ c> 0\right\},\\
 S^{+}&:=\left\{(h,c)\in\mathbb{R}^2:~ h^2 =\lambda^{\lambda}\cdot \left(\dfrac{2}{\lambda +1}\right)^{\lambda +1}\cdot c^{\lambda +1};~ h> 0;~ c>   0\right\}\\
&\bigcup\{(0,0)\}\bigcup\left\{(h,c)\in\mathbb{R}^2:~ h= 0;~ c>0 \right\}\\
&\bigcup\left\{(h,c)\in\mathbb{R}^2:~ h^2 < \lambda^{\lambda}\cdot \left(\dfrac{2}{\lambda +1}\right)^{\lambda +1}\cdot c^{\lambda +1};~ h> 0;~ c> 0\right\}.
\end{align*}
\end{proposition}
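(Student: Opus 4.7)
My plan is to reduce the problem to a two-dimensional image computation by exploiting the fact that $z$ enters $C$ only through $z^{2}\ge 0$ and does not enter $H$ at all. Specifically, for any target $(h,c)\in\mathbb{R}^2$, the equation $\mathcal{EC}(x,y,z)=(h,c)$ can be solved for $z$ by setting $z^{2}=2c-(x^2+y^2)$, provided $c\ge 0$ and there exist $x,y\in\mathbb{R}$ with $xy^{\lambda}=h$ and $x^2+y^2\le 2c$. Hence $\operatorname{Im}(\mathcal{EC})=\{(h,c):c\ge 0,\ h\in R(c)\}$, where $R(c)$ denotes the range of the continuous function $(x,y)\mapsto xy^{\lambda}$ on the closed disk $D_{c}:=\{x^2+y^2\le 2c\}$.

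Next I would identify $R(c)$. For $c=0$ the disk degenerates and $R(0)=\{0\}$, contributing the single point $(0,0)$. For $c>0$, the map is continuous on the connected set $D_{c}$, so $R(c)$ is an interval; moreover the involution $(x,y)\mapsto(-x,y)$ preserves $D_{c}$ while sending $xy^{\lambda}$ to $-xy^{\lambda}$, so this interval is symmetric about $0$, i.e.\ $R(c)=[-m(c),m(c)]$ for some $m(c)\ge 0$. To determine $m(c)$, I would compute $\max\{xy^{\lambda}:x^{2}+y^{2}=2c\}$ via Lagrange multipliers; parametrizing with $x=\sqrt{2c}\cos\theta$, $y=\sqrt{2c}\sin\theta$ and differentiating $\cos\theta\sin^{\lambda}\theta$, the interior critical points satisfy $\tan^{2}\theta=\lambda$, and substitution gives $m(c)^{2}=\lambda^{\lambda}\bigl(\frac{2}{\lambda+1}\bigr)^{\lambda+1}c^{\lambda+1}$.

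Combining yields $\operatorname{Im}(\mathcal{EC})=\bigl\{(h,c)\in\mathbb{R}^2:c\ge 0,\ h^{2}\le\lambda^{\lambda}(2/(\lambda+1))^{\lambda+1}c^{\lambda+1}\bigr\}$. The announced splitting is then a bookkeeping exercise of stratifying this planar region by the sign of $h$ and by whether the boundary inequality is strict or attained. Splitting by the sign of $h$ produces $\operatorname{Im}(\mathcal{EC})=S^{-}\cup S^{+}$, with overlap exactly along $\{h=0,\,c\ge 0\}$; inside each half-region the four subsets listed in the statement correspond, in order, to the boundary arc $h^{2}=\lambda^{\lambda}(2/(\lambda+1))^{\lambda+1}c^{\lambda+1}$, the vertex $(0,0)$, the open half-axis $\{h=0,\,c>0\}$, and the open interior defined by the strict inequality.

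The main technical obstacle is the Lagrange-multiplier calculation: I must verify that the interior critical value really is the maximum (by comparing with the endpoints $\theta\in\{0,\pi/2\}$ where $xy^{\lambda}=0$) and keep the signs straight so that $S^{-}$ and $S^{+}$ together cover the whole image, overlapping only along the positive $c$-axis. Everything else reduces to connectedness, the $(x,y)\mapsto(-x,y)$ symmetry, and the routine decomposition of the resulting planar region into its semialgebraic strata.
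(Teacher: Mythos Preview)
Your proposal is correct and essentially carries out in detail the ``simple algebraic computation'' that the paper's one-line proof alludes to. Eliminating $z$ via $z^{2}=2c-(x^{2}+y^{2})$, reducing to the range of $xy^{\lambda}$ on closed disks, and then optimizing on the boundary circle to obtain $m(c)^{2}=\lambda^{\lambda}(2/(\lambda+1))^{\lambda+1}c^{\lambda+1}$ is exactly the natural calculation the paper leaves implicit; the subsequent sign-of-$h$ splitting into $S^{-}\cup S^{+}$ and the stratification by boundary versus interior are then straightforward, as you note.
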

\begin{proof}
 The conclusion follows directly by simple algebraic computation using the definition of the energy-Casimir mapping.
\end{proof}
\medskip

The connection between the semialgebraic splittings of the image $Im({\mathcal EC})$ given by Proposition \ref{p51}, and the equilibrium states of the rattleback system, is given in the following remark. Before stating the remark let us split the set of equilibrium states of the rattleback system according to their stability. 

More precisely, we denote by $$\mathcal{E}^{s}:=\mathcal{E}^{s,\star}\cup\mathcal{E}^{s,0},$$
the set of stable equilibrium states, where
$$\mathcal{E}^{s,\star}:=\{(M,- M\sqrt{\lambda},0),(M, M\sqrt{\lambda},0)\in\mathbb{R}^3:~ M\in\mathbb{R},~ M\neq 0\},~~~ \mathcal{E}^{s,0}:=\{(0,0,0)\},$$
and respectively we denote by $$\mathcal{E}^{u}:=\{(0,0,M)\in\mathbb{R}^3:~ M\in\mathbb{R},~ M\neq 0\},$$
the set of unstable equilibrium states of the rattleback system \eqref{sys}.

\begin{remark}\label{strt}
The semialgebraic canonical Whitney stratifications of the sets $S^{-}, S^{+}$ are described in terms of the image of equilibria of the rattleback system through the map $\mathcal EC$ as follows:

\begin{itemize}
\item[(i)] $S^{-} =\Sigma^{s,-,\star}\cup \Sigma^{s,0} \cup \Sigma^{u}\cup \Sigma^{p,-}$, where
\begin{align*} 
\Sigma^{s,-,\star}&:=\operatorname{Im}(\left.{\mathcal EC}\right|_{{\mathcal E}^{s,\star}})\cap \{(h,c)\in\mathbb{R}^2 :~ h< 0;~ c> 0\},\\
\Sigma^{s,0}&:=\operatorname{Im}(\left.{\mathcal EC}\right|_{{\mathcal E}^{s,0}})=\mathcal EC (0,0,0)= \{(0,0)\},\\
\Sigma^{u}&:=\operatorname{Im}(\left.{\mathcal EC}\right|_{{\mathcal E}^{u}}),\\
\Sigma^{p,-}&:=\left\{(h,c)\in\mathbb{R}^2:~ h^2 < \lambda^{\lambda}\cdot \left(\dfrac{2}{\lambda +1}\right)^{\lambda +1}\cdot c^{\lambda +1};~ h< 0;~ c> 0\right\},
\end{align*}
and $\Sigma^{p,-}$ denotes the principal stratum of $S^{-}$.
\item[(ii)] $S^{+} =\Sigma^{s,+,\star}\cup \Sigma^{s,0} \cup \Sigma^{u}\cup \Sigma^{p,+}$, where
\begin{align*} 
\Sigma^{s,+,\star}&:=\operatorname{Im}(\left.{\mathcal EC}\right|_{{\mathcal E}^{s,\star}})\cap \{(h,c)\in\mathbb{R}^2 :~ h> 0;~ c> 0\},\\
\Sigma^{s,0}&:=\operatorname{Im}(\left.{\mathcal EC}\right|_{{\mathcal E}^{s,0}})=\mathcal EC (0,0,0)= \{(0,0)\},\\
\Sigma^{u}&:=\operatorname{Im}(\left.{\mathcal EC}\right|_{{\mathcal E}^{u}}),\\
\Sigma^{p,+}&:=\left\{(h,c)\in\mathbb{R}^2:~ h^2 < \lambda^{\lambda}\cdot \left(\dfrac{2}{\lambda +1}\right)^{\lambda +1}\cdot c^{\lambda +1};~ h> 0;~ c> 0\right\},
\end{align*}
and $\Sigma^{p,+}$ denotes the principal stratum of $S^{+}$.
\item[(iii)] Moreover, the rattleback system is an example of 3D Hamilton-Poisson system for which the image of the energy-Casimir map is not convexly generated by the image of the stable equilibrium states, i.e.
\begin{align*} 
\operatorname{Im}({\mathcal EC}) \neq \overline{co}\left\{\operatorname{Im}(\left.{\mathcal EC}\right|_{\mathcal{E}^{s}})\right\}.
\end{align*}
\end{itemize}
\end{remark}

All the stratification results can be gathered as shown in Fig. \ref{fig3}.

\begin{figure}[H]
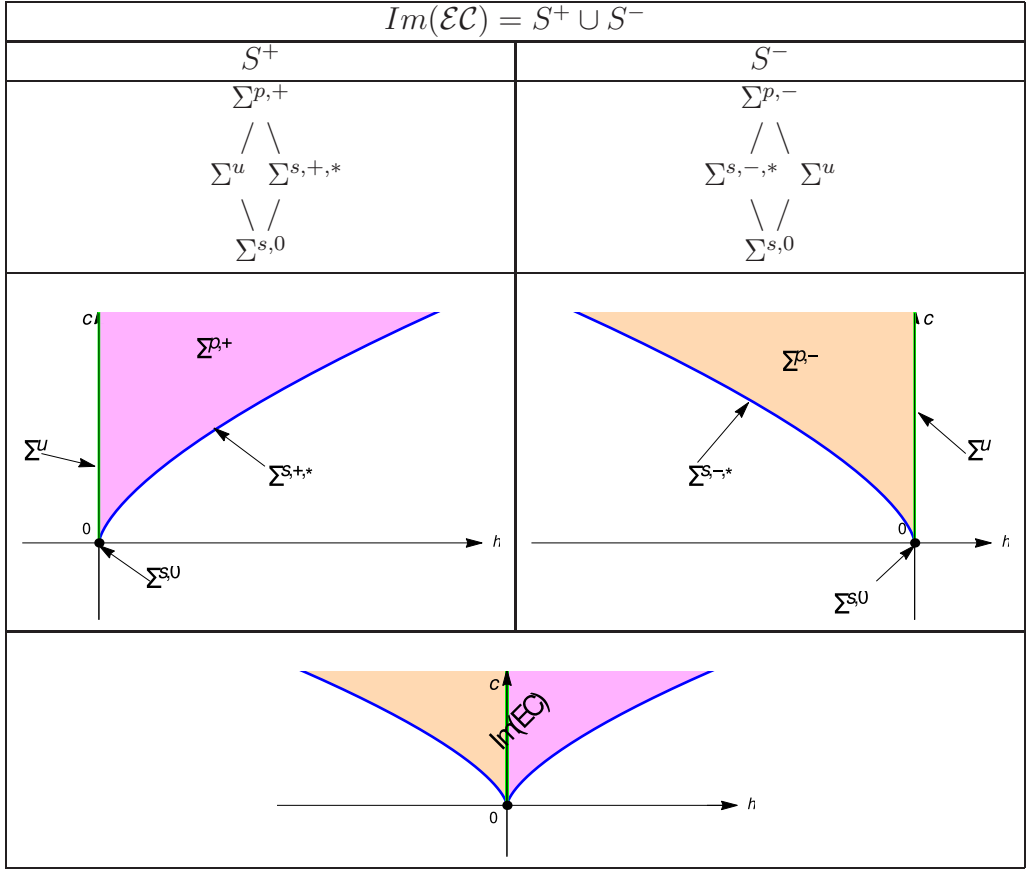

\vspace*{25pt}
\centering
\begin{tabular}{|c|c|c|}
\hline
\multicolumn{2}{|c|}{$Im({\cal EC})=S^+\cup S^-$}\\
\hline
$S^+$ &  $S^-$\\
\hline
$\Sigma^{p,+}$ & $\Sigma^{p,-}$ \\
/ $\backslash$ &/ $\backslash$ \\
$\ \ \ \Sigma^{u}$ $\ \Sigma^{s,+,*}$  & $\Sigma^{s,-,*}$  $\ \Sigma^{u}$ \\
 $\backslash$ / & $\backslash$ / \\
$\Sigma^{s,0}$  & $\Sigma^{s,0}$ \\
\hline
 & \\
\scalebox{0.5}{%
\includegraphics*{imp.%
eps}} &
\scalebox{0.5}{%
\includegraphics*{imm.%
eps}} \\
\hline
\multicolumn{2}{|c|}{}\\
\multicolumn{2}{|c|}{
\scalebox{0.5}{%
\includegraphics*{im.%
eps}}}\\
\hline
\end{tabular}
\caption{\it Semialgebraic splitting of $Im({\mathcal EC})$.}\label{fig3}
\end{figure}

\section{The topology of the fibers of the energy-Casimir mapping}

In this section we describe the topology of the fibers of $\mathcal EC$, considering for our study fibers over regular values of $\mathcal EC$ as well as fibers over the singular values. It will remain an open question how these fibers fit all together in a more abstract fashion, such as bundle structures in the symplectic Arnold-Liouville integrable regular case.

\begin{proposition}\label{topostrt}
According to the stratifications from the previous section, the topology of the fibers of $\mathcal EC$ can be described as in Tables \ref{tab1} and \ref{tab2}:
\vskip1cm
\begin{table}[H]
\centering
\begin{tabular}{|c|c|c|c|c|}
\hline
$S^+$ & \multicolumn{4}{|c|}{ }\\
\hline
$A\subseteq S^+$ & $\Sigma^{p,+}$ & $\Sigma^{s,+,*}$ & $\Sigma^{u}$ & $\Sigma^{s,0}$\\
\hline
 &  &  & \multirow{4}{*}{\scalebox{0.2}{%
\includegraphics*{cercuri.%
eps}}}  & \\
${\cal F}_{(h,c)}\subseteq\R^3,$ & $S^{1} \amalg S^{1}$ & $\{pt\}\times \{pt'\}$ &   & $(0,0,0)$\\
$(h,c)\in A$ &  & $pt\ne pt'$ &  &\\
  &  & & & \\
   &  & & & \\
\hline
Dynamical   & pair of  & pair of stable & heteroclinic & stable\\
description & periodic & equilibrium    & cycles       & equilibrium\\
            & orbits   & states         &              & state\\
\hline
\end{tabular}
\caption{\it Fibers classification corresponding to $S^{+}$.}\label{tab1}
\end{table}

\vskip1cm
\begin{table}[H]
\centering
\begin{tabular}{|c|c|c|c|c|}
\hline
$S^-$ & \multicolumn{4}{|c|}{ }\\
\hline
$A\subseteq S^-$ & $\Sigma^{p,-}$ & $\Sigma^{s,-,*}$ & $\Sigma^{u}$ & $\Sigma^{s,0}$\\
\hline
&  &  & \multirow{4}{*}{\scalebox{0.2}{%
\includegraphics*{cercuri.%
eps}}}  & \\
${\cal F}_{(h,c)}\subseteq\R^3,$ & $S^{1} \amalg S^{1}$ & $\{pt\}\times \{pt'\}$ &   & $(0,0,0)$\\
$(h,c)\in A$ &  & $pt\ne pt'$ & & \\
 &  & & & \\
   &  & & & \\
\hline
Dynamical   & pair of  & pair of stable & heteroclinic & stable\\
description & periodic & equilibrium    & cycles       & equilibrium\\
            & orbits   & states         &              & state\\
\hline
\end{tabular}
\caption{\it Fibers classification corresponding to $S^{-}$.}
\label{tab2}
\end{table}

\end{proposition}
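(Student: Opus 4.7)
My plan is to turn the problem into planar geometry via the projection $(x,y,z)\mapsto(x,y)$, which makes the sphere $\{x^2+y^2+z^2=2c\}$ a branched double cover of the closed disk $D_c:=\{x^2+y^2\le 2c\}$, ramified over $\partial D_c$. Since the equation $xy^\lambda=h$ does not involve $z$, the fiber $\mathcal{F}_{(h,c)}$ is exactly the preimage of the planar set $\Gamma_h\cap D_c$, where $\Gamma_h:=\{(x,y):xy^\lambda=h\}$. Thus the topology is controlled by how the algebraic curve $\Gamma_h$ sits inside the disk $D_c$, and I will analyze this stratum by stratum.

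The four strata yield four pictures. For $\Sigma^{s,0}=\{(0,0)\}$ only the origin satisfies $C=0$. For $\Sigma^u$ (where $h=0$, $c>0$), the fact that $\lambda\in\mathbb{N}$ forces $\Gamma_0=\{x=0\}\cup\{y=0\}$, so $\Gamma_0\cap D_c$ is a cross of two diameters meeting at $(0,0)$; its branched double cover is the union of two great circles of the sphere meeting at the poles $(0,0,\pm\sqrt{2c})$, which are the unstable equilibria, and the four remaining arcs sit inside the invariant planes $\{x=0\}$ and $\{y=0\}$, so they are heteroclinic orbits joining the two poles. For $\Sigma^{s,\pm,*}$, a Lagrange-multiplier computation analogous to the one underlying Theorem \ref{stability} shows that the minimum of $\rho:=\tfrac{1}{2}(x^2+y^2)$ on $\Gamma_h$ is attained precisely at the two points with $z=0$ and $y^2=\lambda x^2$, i.e. the stable equilibria $e^{|M|}_\pm$, and this minimum value equals exactly $c$ on the discriminant curve; therefore $\Gamma_h\cap D_c$ reduces to these two tangency points, which lift to themselves. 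For the interiors $\Sigma^{p,\pm}$, the same minimum is strictly less than $c$, and $\Gamma_h$ splits into two smooth branches (one with $y>0$ and one with $y<0$, thanks to $\lambda\in\mathbb{N}$), on each of which $\rho$ has a unique critical point; hence $\Gamma_h\cap D_c$ is a disjoint union of two arcs, each with endpoints on $\partial D_c$.

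The last step is to lift these two arcs back to the sphere and to recognize the resulting circles as periodic orbits. Branched doubling of a compact arc whose endpoints lie on the ramification locus produces a circle, so the fiber over the principal stratum is $S^1\amalg S^1$. Each such circle is a connected component of a joint level set of $H$ and $C$, contains no equilibria (since $(h,c)$ is a regular value of $\mathcal{EC}$), and is therefore a compact connected $1$-dimensional invariant set of the smooth rattleback vector field; consequently it is a periodic orbit. The step I expect to be the trickiest is the arc-counting in $\Sigma^{p,\pm}$: one must rule out the possibility that $\Gamma_h\cap D_c$ has more components than the two already detected, and this is where the connectedness of each branch of $\Gamma_h$ in the half-plane $\{y>0\}$ or $\{y<0\}$ together with the uniqueness of the critical point of $\rho$ on each branch is essential.
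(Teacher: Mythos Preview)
Your argument is correct and considerably more detailed than the paper's own proof, which simply asserts that ``the conclusion follows by simple computations according to the topology of the solution set of the system $H(x,y,z)=h$, $C(x,y,z)=c$'' for $(h,c)$ in each stratum. You supply what the paper omits: an actual mechanism for reading off the topology.

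Your projection $(x,y,z)\mapsto(x,y)$ realizing the sphere $C^{-1}(\{c\})$ as a branched double cover of the disk $D_c$, together with the observation that $H$ is independent of $z$, reduces each case to elementary planar geometry. The four cases are handled cleanly: the $\Sigma^{s,0}$ and $\Sigma^u$ cases are immediate; the Lagrange-multiplier computation identifying the minima of $\rho=\tfrac12(x^2+y^2)$ on $\Gamma_h$ with the stable equilibria correctly pins down the discriminant stratum $\Sigma^{s,\pm,*}$; and your parametrization $x=h/y^\lambda$ of each branch of $\Gamma_h$ (for $h\neq 0$), combined with the single-well shape of $\rho(y)=\tfrac12(h^2 y^{-2\lambda}+y^2)$, disposes of the arc-counting issue you flagged and yields exactly two arcs, hence $S^1\amalg S^1$ on the sphere. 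The periodicity argument via regular values is also sound, since on $\Sigma^{p,\pm}$ one has $h\neq 0$, forcing $y\neq 0$ on the fiber, so the rescaled Hamiltonian vector field is nowhere vanishing there.

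In short, the paper treats this as a routine verification and gives no argument; your branched-cover viewpoint is a genuine (and arguably more illuminating) proof of the same facts.
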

\noindent \begin{proof}
The conclusion follows by simple computations according to the topology of the solution set of the system:
$$\left\{ \begin{array}{l}
 H(x,y,z) = h \\
 C(x,y,z) = c \\
 \end{array} \right.$$
 where $(h,c)$ belongs to the semialgebraic manifolds introduced in the above section.
\end{proof}
\medskip

A presentation that puts together the topological classification of the fibers of $\mathcal EC$ and the topological classification of the symplectic leaves of the Poisson manifold $(\R^3,\nu\Pi_C)$, is given in Fig. \ref{fig6}.

\begin{figure}[H]
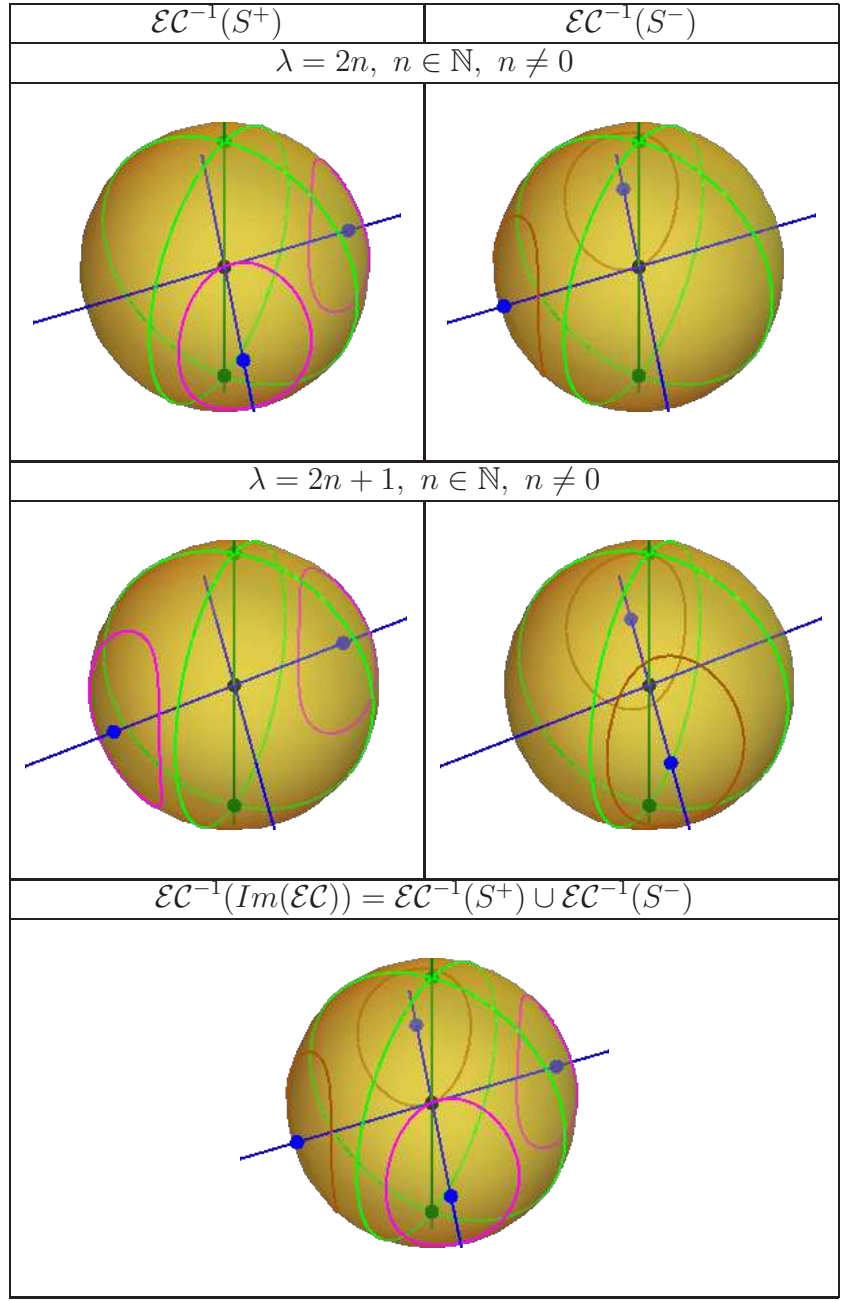

\centering
\vspace*{20pt}
\index{\footnote{}}
\begin{tabular}{|c|c|}
\hline
${\cal EC}^{-1}({S^+})$ &  ${\cal EC}^{-1}(S^-)$\\
\hline
\multicolumn{2}{|c|}{$\lambda=2n,\ n\in\N,\ n\ne0$}\\
\hline
 & \\
\scalebox{0.7}{%
\includegraphics*{sigmap.%
eps}} &
\scalebox{0.7}{%
\includegraphics*{sigmam.%
eps}} \\
 &  \\
\hline
\multicolumn{2}{|c|}{$\lambda=2n+1,\ n\in\N,\ n\ne0$}\\
\hline
 & \\
\scalebox{0.7}{%
\includegraphics*{sigmap_imp.%
eps}} &
\scalebox{0.7}{%
\includegraphics*{sigmam_imp.%
eps}} \\
 &  \\
\hline
\multicolumn{2}{|c|}{${\cal EC}^{-1}(Im({\cal EC}))={\cal EC}^{-1}({S^+})\cup{\cal EC}^{-1}({S^-})$}\\
\hline
\multicolumn{2}{|c|}{ }\\
\multicolumn{2}{|c|}{
\scalebox{0.7}{%
\includegraphics*{sigma.%
eps}}}\\
 \multicolumn{2}{|c|}{ }\\
\hline
\end{tabular}
\caption{\it Phase portrait splitting.}\label{fig6}
\end{figure}

\section{Heteroclinic solutions}

In this section, using the topology of the fibers of $\mathcal{EC}$, we obtain explicit formulas for the heteroclinic solutions of the system \eqref{sys}. The associated orbits connect pairs of unstable equilibrium states of the type $\{(0,0,-|M|),(0,0,|M|)\}$ for $M\in\mathbb{R}\setminus\{0\}$. 
Recall from the previous section that the existence of heteroclinic orbits corresponds in the image of the energy-Casimir mapping to points in the strata $\Sigma^u$.
Using an implicit formulation, these orbits are described by the set of solutions of the following semialgebraic system
\begin{equation}\label{het}
\left\{ \begin{array}{l}
\mathcal{EC}(x,y,z) = (h,c), \\ 
(h,c)\in\Sigma^u,\\
(h,c)=\mathcal{EC}(0,0,\pm |M|),
\end{array} \right.
\end{equation}
with $M\ne0$.
In order to obtain an explicit form of the heteroclinic solutions, we reduce the system \eqref{sys} from dimension three to dimension one, applying the constraints \eqref{het}, and then we integrate the resulting differential equation. Reconstructing the solutions of the initial system, we get the explicit form of the heteroclinic solutions connecting the unstable equilibrium states $(0,0,-|M|)$ and $(0,0,|M|)$ with $M\ne0$.
In order to do that, we adopt the following notations:
$$\left\{ \begin{array}{l}
x(t):= \dfrac{2 M^2 \left( \cosh [|M|(\lambda t+ k)]+ \sinh [|M|(\lambda t+ k)] \right)}{M^2 \cosh(2|M|k)+ \cosh(2|M|\lambda t)+ M^2 \sinh(2|M|k)+\sinh(2|M|\lambda t)}, \\
y(t):=\dfrac{2 M^2}{(M^2 +1)\cosh[|M|(t+k)]+(M^2 -1)\sinh[|M|(t+k)]},\\
z(t):=\dfrac{|M|(-1+M^2 \cosh[2|M|(t+k)] +M^2 \sinh[2|M|(t+k)])}{1+M^2 \cosh[2|M|(t+k)]+M^2 \sinh[2|M|(t+k)]},\\
\tilde{z}(t):=\dfrac{|M|(M^2 \cosh(2|M|k)-\cosh(2|M|\lambda t)+M^2 \sinh(2|M|k)-\sinh(2|M|\lambda t))}{M^2 \cosh(2|M|k)+\cosh(2|M|\lambda t)+M^2 \sinh(2|M|k)+\sinh(2|M|\lambda t)},
\end{array} \right. t,k\in\R.$$
Summarizing, for each $M\neq 0$, we get the existence of four heteroclinic solutions of the rattleback system \eqref{sys} connecting the unstable equilibria $e_{3}^{-|M|}=(0,0,-|M|)$ and $e_{3}^{|M|}=(0,0,|M|)$, given by
\begin{align*}
\mathcal{H}_{(+,0)}^{(0,0,\pm |M|)}(t)&=:\left(x(t),0,\tilde{z}(t)\right),\\
\mathcal{H}_{(-,0)}^{(0,0,\pm |M|)}(t)&=:\left(-x(t),0,\tilde{z}(t)\right),\\
\mathcal{H}_{(0,+)}^{(0,0,\pm |M|)}(t)&=:\left(0,y(t),z(t)\right),\\
\mathcal{H}_{(0,-)}^{(0,0,\pm |M|)}(t)&=:\left(0,-y(t),z(t)\right), ~~ t\in\mathbb{R}.
\end{align*}
\begin{remark}
The orbits of the above defined heteroclinic solutions, connecting the unstable equilibria $e_{3}^{-|M|},e_{3}^{|M|}$, are in one-to-one correspondence with the open semicircles obtained from the intersection between the invariant twice punctured sphere $C^{-1}(\{M^2/2\})\setminus\{e_{3}^{-|M|},e_{3}^{|M|}\}$ and the union of invariant planes $xy=0$.
\end{remark}

\section{Lax Formulation}

The aim of this short section is to provide an explicit Lax formulation of the rattleback system \eqref{sys}.

Before stating the result of this section, note that the system \eqref{sys} restricted to a regular symplectic leaf, gives rise to a symplectic Hamiltonian system that is completely integrable in the sense of Liouville, and consequently it admits a Lax formulation.

The following proposition offers a positive answer regarding the existence of a Lax formulation for the unrestricted system.
\begin{proposition}
The rattleback system \eqref{sys} can be written in the Lax form $\dot L=[L,B]$, where $[L,B]:=L B- B L$, and the matrices $L$ and $B$ are given by
$$
L= \left[
\begin{array}{ccc}
 0 & -x + y\sqrt{\lambda}  & x\sqrt{\lambda} +y  \\
 x - y\sqrt{\lambda} & 0 &  z\sqrt{\lambda +1} \\
 -x\sqrt{\lambda} - y & - z\sqrt{\lambda +1} & 0 
\end{array}
\right],$$
$$B=\left[
\begin{array}{ccc}
 0 & (-x\sqrt{\lambda} + y )\sqrt{\lambda +1} & 0  \\
 (x\sqrt{\lambda} - y )\sqrt{\lambda +1} & 0 & z\sqrt{\lambda} \\
 0 & - z\sqrt{\lambda} & 0 
\end{array}
\right].$$
\end{proposition}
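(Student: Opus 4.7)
The claim is a direct verification that two specific $3\times 3$ matrix expressions, $\dot L$ (along trajectories of \eqref{sys}) and $[L,B]$, agree entrywise. My plan is to exploit the structural features of $L$ and $B$ to reduce the bookkeeping, then carry out the remaining computation.

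\smallskip

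\noindent\textbf{Step 1: Symmetry reduction.} The first thing I would observe is that both $L$ and $B$ are skew-symmetric. Consequently $\dot L$ is skew-symmetric along any trajectory, and $[L,B]^{T}=B^{T}L^{T}-L^{T}B^{T}=(-B)(-L)-(-L)(-B)=-[L,B]$, so $[L,B]$ is skew-symmetric too. Therefore it suffices to verify the three strictly-upper-triangular entries $(1,2)$, $(1,3)$, $(2,3)$ of the identity $\dot L=[L,B]$; the three lower entries follow by skew-symmetry, and the diagonal entries vanish on both sides.

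\smallskip

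\noindent\textbf{Step 2: Compute the left-hand side.} Using \eqref{sys} (i.e. $\dot x=\lambda xz$, $\dot y=-yz$, $\dot z=y^{2}-\lambda x^{2}$), I would record
\begin{align*}
\dot L_{12} &= -\dot x + \sqrt{\lambda}\,\dot y = -\lambda xz - \sqrt{\lambda}\,yz = -\sqrt{\lambda}\,z\,(\sqrt{\lambda}\,x+y),\\
\dot L_{13} &= \sqrt{\lambda}\,\dot x + \dot y = \lambda\sqrt{\lambda}\,xz - yz = z\,(\lambda\sqrt{\lambda}\,x - y),\\
\dot L_{23} &= \sqrt{\lambda+1}\,\dot z = \sqrt{\lambda+1}\,(y^{2}-\lambda x^{2}).
\end{align*}

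\smallskip

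\noindent\textbf{Step 3: Compute the right-hand side.} Here I would expand $(LB)_{ij}-(BL)_{ij}$ row by row, taking advantage of the fact that $B$ has the zeros $B_{11}=B_{13}=B_{22}=B_{31}=B_{33}=0$, so each commutator entry is a sum of only a few nonzero products. For $(1,2)$, only $B_{12}$, $B_{21}$, $B_{23}$, $B_{32}$ enter, and the quadratic terms $xy$ and $y^{2}$ in $L_{12}B_{22}$ and similar positions drop out, leaving a linear-in-$z$ expression that should match $-\sqrt{\lambda}\,z(\sqrt{\lambda}\,x+y)$. For $(1,3)$, the computation will again collapse because $B_{13}=B_{33}=0$, producing a single linear-in-$z$ contribution. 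The entry $(2,3)$ is the most informative: it is the only one where the $z$-column/row of $L$ combines with the off-diagonal block $(-x\sqrt{\lambda}+y)\sqrt{\lambda+1}$ of $B$, and will produce quadratic terms $y^{2}-\lambda x^{2}$ multiplied by $\sqrt{\lambda+1}$, matching $\dot L_{23}$ on the nose. The $\sqrt{\lambda}(\lambda+1)=\sqrt{\lambda}\cdot\sqrt{\lambda+1}\cdot\sqrt{\lambda+1}$ factorization is exactly what forces the choice of $\sqrt{\lambda+1}$ and $\sqrt{\lambda}$ weights in $L$ and $B$, and this is the algebraic coincidence that makes the Lax pair work.

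\smallskip

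\noindent\textbf{Main obstacle.} There is no conceptual difficulty; the only risk is arithmetic bookkeeping with the two irrational factors $\sqrt{\lambda}$ and $\sqrt{\lambda+1}$ appearing in both $L$ and $B$. The cleanest way I would organize it is to factor $\sqrt{\lambda+1}$ out of $B$ wherever possible and to track, entry by entry, the cancellation of all quadratic monomials $x^{2},xy,y^{2},z^{2}$ that do not appear on the left-hand side (the only quadratic monomials that should survive are $y^{2}$ and $\lambda x^{2}$, and only in the $(2,3)$ entry). Once the three upper-triangular entries agree, skew-symmetry closes the argument.
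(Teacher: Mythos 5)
Your proposal is correct and matches the intended argument: the paper states this proposition without any proof, leaving it as a direct entrywise verification, which is exactly what you set up. Your skew-symmetry reduction is valid, your three left-hand-side entries $\dot L_{12}=-\sqrt{\lambda}\,z(\sqrt{\lambda}\,x+y)$, $\dot L_{13}=z(\lambda\sqrt{\lambda}\,x-y)$, $\dot L_{23}=\sqrt{\lambda+1}\,(y^{2}-\lambda x^{2})$ are right, and the commutator entries you describe in Step 3, once actually expanded, do equal these (e.g. $[L,B]_{23}=-(BL)_{23}=\sqrt{\lambda+1}\,(y^{2}-\lambda x^{2})$ since $(LB)_{23}=0$), so the only remaining work is to write out those three short expansions explicitly.
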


\section{Asymptotic stabilization of an arbitrary fixed orbit/cycle of the rattleback dynamical system}

In this section we provide a method to stabilize asymptotically any arbitrary fixed orbit/cycle of the rattleback dynamical system. More precisely, for an a-priori fixed orbit/cycle $\mathcal{O}$ of the rattleback system \eqref{sys} we construct explicitly a conservative perturbation (in the sense that the Casimir invariant remains a first integral of the perturbed system) which preserves the orbit/cycle $\mathcal{O}$, keeps its dynamical nature unchanged, and moreover, the orbit/cycle $\mathcal{O}$ becomes asymptotically stable (with respect to perturbations along the invariant manifold $C^{-1}(\{C(\mathcal{O})\})$) as an orbit of the perturbed dynamics, regardless of its initial stability nature. Additionally, the same perturbation provides asymptotic stabilization of all the dynamical elements from the set $(\mathcal{EC})^{-1}(\{\mathcal{EC}(\mathcal{O})\})$. As the perturbed dynamics is conservative ($C$ remains a first integral of the perturbed system too), in order to approach asymptotically some orbit/cycle $\mathcal{O}$, we must start from a point located on the invariant manifold $C^{-1}(\{C(\mathcal{O})\})$. \textbf{Hence, from now on, asymptotic stabilization of an orbit/cycle $\mathcal{O}$ means actually asymptotic stabilization with respect to perturbations along $C^{-1}(\{C(\mathcal{O})\})$.}

The explicit construction of the perturbations agrees with the topological classification of the orbits given in Section 6. More precisely, for each arbitrary fixed dynamical object described in Proposition \ref{topostrt} (i.e. pair of stable equilibrium points, pair of periodic orbits, and heteroclinic cycles) we construct an explicit conservative perturbation of the rattleback system with the above mentioned properties. Moreover, in the case of periodic orbits, we construct conservative perturbations leading to a stronger type of asymptotic stability, i.e. the so called asymptotic stability with phase.

The general perturbation procedure we use, works as follows. First, pick a dynamical element located on some fiber $(\mathcal{EC})^{-1}(\{(h,c)\})$ of the energy-Casimir mapping, as classified in Proposition \ref{topostrt}. Then perturb the rattleback system \eqref{sys} by adding a smooth vector field $X_0$ such that the resulting vector field admits $C$ as first integral and keeps dynamically invariant the fiber $(\mathcal{EC})^{-1}(\{(h,c)\})=\{H=h\}\cap \{C=c\}$, i.e. the dynamical element(s) to be stabilized. Following \cite{tudoran1}, the vector field $X_0$ is proportional (at least in some open subset) to the vector field 
\begin{align}\label{X0}
\begin{split}
(H(x,y,z)-h)\cdot &[\nabla C(x,y,z) \times (\nabla C(x,y,z) \times \nabla H(x,y,z))]\\
&=(H(x,y,z)-h)\cdot y^{\lambda-1} \cdot\left[ \begin{array}{c} y(\lambda x^2 -y^2 -z^2) \\ x(-\lambda x^2 +y^2 -\lambda z^2) \\ (\lambda +1)xyz \end{array} \right].
\end{split}
\end{align}
The proportionality function will be constructed separately for each case, in accordance with the Lyapunov function used in order to asymptotically stabilize the chosen dynamical element.

\subsection{Asymptotic stabilization of the Lyapunov stable equilibrium states of the rattleback system}

The aim of this section is to provide a method to stabilize asymptotically each arbitrary fixed Lyapunov stable equilibrium state of the rattleback system \eqref{sys}, and in the same time to keep the conservative nature of the dynamics, in the sense that the Casimir invariant, $C = \dfrac{1}{2} (x^2 +y^2 +z^2)$, remains a first integral of the perturbed system. Thus, as already mentioned in the previous section, the asymptotic stabilization procedure makes sense only for perturbations along the corresponding level sets of $C$. Note that the only Lyapunov stable equilibrium state which cannot be asymptotically stabilized using this approach is the origin, as it belongs to the degenerate sphere $C^{-1}(\{C(0,0,0)\})=\{(0,0,0)\}$.

Before starting the construction of the perturbed dynamical system, let us recall from Remark \ref{strt} that the image through the energy-Casimir mapping of the nontrivial Lyapunov stable equilibrium states of the rattleback system, is given by  $\Sigma^{s,-,\star}\cup\Sigma^{s,+,\star}$. Moreover, according to Proposition \ref{topostrt}, each point, $(h,c)\in\Sigma^{s,-,\star}\cup\Sigma^{s,+,\star}$, corresponds to a pair of Lyapunov stable equilibrium states of the rattleback system \eqref{sys}, given by $\mathcal{EC}^{-1}(\{(h,c)\})$. Reacall from Theorem \ref{stability} that each nontrivial Lyapunov stable equilibrium state is given by $(M,-M\sqrt{\lambda},0)$ or $(M,M\sqrt{\lambda},0)$ for some $M\in\mathbb{R}\setminus\{0\}$.

Consequently, on each Casimir level set, $C^{-1}(\{(\lambda+1)M^2 /2\})$, $M\neq 0$, there exist exactly four Lyapunov stable equilibrium states of the rattleback system \eqref{sys}, i.e. $-e_{-}^{|M|}$, $-e_{+}^{|M|}$, $e_{-}^{|M|}$, and $e_{+}^{|M|}$, where 
$e_{-}^{|M|}:=(|M|,-|M|\sqrt{\lambda},0),~e_{+}^{|M|}:=(|M|,|M|\sqrt{\lambda},0)$.

As the geometric location of these equilibrium states depends explicitly on the parity of the parameter $\lambda$, we distinguish two cases according to the parity of $\lambda$. More precisely, we construct two perturbations of the rattleback system \eqref{sys}, denoted by $p_{\pm}$ (depending on $\lambda$ and $M$), and we prove that for $\lambda=2n$, $n\in\mathbb{N}\setminus \{0\}$, the perturbation $p_{-}$ stabilize asymptotically the equilibrium states $e_{-}^{|M|}$ and $e_{+}^{|M|}$, and the perturbation $p_{+}$ stabilize asymptotically the equilibrium states $-e_{-}^{|M|}$ and $-e_{+}^{|M|}$, while for $\lambda=2n+1$, $n\in\mathbb{N}\setminus \{0\}$, the perturbation $p_{-}$ stabilize asymptotically the equilibrium states $e_{+}^{|M|}$ and $-e_{+}^{|M|}$, and the perturbation $p_{+}$ stabilize asymptotically the equilibrium states $e_{-}^{|M|}$ and $-e_{-}^{|M|}$. 

As the perturbations we are looking for must be conservative (in the sense that the Casimir invariant, $C = 1/2 \cdot (x^2 +y^2 +z^2)$, remains a first integral for the perturbed dynamics), the asymptotic stabilization procedure is supposed to take place only on the dynamically invariant sphere $C^{-1}(\{(\lambda+1)M^2 /2\})$.

In order to construct the perturbations $p_{\pm}$ we take into account that for $\lambda=2n$, $n\in\mathbb{N}\setminus \{0\}$, the pair of equilibrium states $\{e_{-}^{|M|},e_{+}^{|M|}\}$ is given by the intersection between $C^{-1}(\{(\lambda+1)M^2 /2\})$ and $H^{-1}(\{|M|^{\lambda+1}(\sqrt{\lambda})^{\lambda}\})$, and similarly, the pair of equilibrium states $\{-e_{-}^{|M|},-e_{+}^{|M|}\}$ is given by the intersection between $C^{-1}(\{(\lambda+1)M^2 /2\})$ and $H^{-1}(\{-|M|^{\lambda+1}(\sqrt{\lambda})^{\lambda}\})$. On the other hand, for $\lambda=2n+1$, $n\in\mathbb{N}\setminus \{0\}$, the pair of equilibrium states $\{e_{+}^{|M|},-e_{+}^{|M|}\}$ is given by the intersection between $C^{-1}(\{(\lambda+1)M^2 /2\})$ and $H^{-1}(\{|M|^{\lambda+1}(\sqrt{\lambda})^{\lambda}\})$, while the pair of equilibrium states $\{e_{-}^{|M|},-e_{-}^{|M|}\}$ is given by the intersection between $C^{-1}(\{(\lambda+1)M^2 /2\})$ and $H^{-1}(\{-|M|^{\lambda+1}(\sqrt{\lambda})^{\lambda}\})$. 

Let us state now the main result of this section.

\begin{theorem}
Let $\varepsilon >0$ be a real parameter and let $M\in\mathbb{R}\setminus\{0\}$. The following perturbations of the rattleback system \eqref{sys} 
\begin{equation*}
p_{\pm}: \left[ \begin{array}{c} \dot{x} \\ \dot{y} \\ \dot{z} \end{array} \right]=\left[ \begin{array}{c}  \lambda xz \\ -yz \\ y^2 -\lambda x^2 \end{array} \right]+\varepsilon \left( xy^{\lambda}\pm|M|^{\lambda +1}(\sqrt{\lambda})^{\lambda}\right)y^{\lambda -1}\left[ \begin{array}{c} y(\lambda x^2 -y^2 -z^2) \\ x(-\lambda x^2 +y^2 -\lambda z^2) \\ (\lambda +1)xyz \end{array} \right],
\end{equation*}
$(x,y,z)\in\mathbb{R}^{3}$, asymptotically stabilize the Lyapunov stable equilibrium states $\pm e_{-}^{|M|}, \pm e_{+}^{|M|}$ of the rattleback system \eqref{sys}, with respect to perturbations along the invariant manifold $C^{-1}(\{(\lambda+1)M^2 /2\})$.

More precisely, if $\lambda\in 2\mathbb{N}\setminus\{0\}$ then the perturbation $p_{-}$ asymptotically stabilize the pair of equilibrium states $\{e_{-}^{|M|}, e_{+}^{|M|}\}$, whereas the perturbation $p_{+}$ asymptotically stabilize the pair of equilibrium states $\{-e_{-}^{|M|}, -e_{+}^{|M|}\}$. On the other hand, if $\lambda\in 2\mathbb{N}+1 \setminus\{1\}$ then the perturbation $p_{-}$ asymptotically stabilize the pair of equilibrium states $\{-e_{+}^{|M|}, e_{+}^{|M|}\}$, whereas the perturbation $p_{+}$ asymptotically stabilize the pair of equilibrium states $\{-e_{-}^{|M|}, e_{-}^{|M|}\}$.
\end{theorem}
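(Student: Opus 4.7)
The plan is to work on each invariant sphere $S:=C^{-1}(\{(\lambda+1)M^{2}/2\})$ and construct a parity-dependent Lyapunov function whose zero locus on $S$ is exactly the pair of equilibria to be stabilized. The sphere $S$ is preserved by both $p_{\pm}$ because the perturbation vector is of the form $(H\mp h_{0})\,\nabla C\times(\nabla C\times\nabla H)$ (with $h_{0}:=|M|^{\lambda+1}(\sqrt{\lambda})^{\lambda}$), and any such vector is pointwise orthogonal to $\nabla C$; hence $\dot C=0$. Moreover, the scalar factor $(H\mp h_{0})$ forces the perturbation to vanish on $\{H=\pm h_{0}\}\cap S$, and a direct evaluation of $H$ at each of the four equilibria $\pm e_{\pm}^{|M|}\in S$, combined with the parity of $\lambda$, identifies precisely the pair remaining in $\{H=h_{0}\}\cap S$ for $p_{-}$ and in $\{H=-h_{0}\}\cap S$ for $p_{+}$, matching the pairing announced in the theorem.

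Next I would set $h:=\pm h_{0}$ according to the perturbation, and introduce $V:=\tfrac{1}{2}(H-h)^{2}$ as a candidate Lyapunov function on $S$. Using $\nabla H\cdot X_{H}=\{H,H\}_{C}=0$ from the unperturbed Hamilton--Poisson realization, together with the BAC--CAB identity
\begin{equation*}
\nabla H\cdot\bigl(\nabla C\times(\nabla C\times\nabla H)\bigr)=-\|\nabla C\times\nabla H\|^{2},
\end{equation*}
a direct computation yields
\begin{equation*}
\dot V\;=\;-\varepsilon\,(H-h)^{2}\,\|\nabla C\times\nabla H\|^{2}\;\le\;0.
\end{equation*}
The locus $\{\dot V=0\}\cap S$ splits into three pieces: the targeted pair $\{H=h\}\cap S$ (on which $V=0$), the non-targeted pair of equilibria in $\{H=-h\}\cap S$ (on which $V=2h_{0}^{2}$), and the invariant heteroclinic great circle $\{y=0\}\cap S$ (on which $H\equiv 0$, hence $V\equiv h_{0}^{2}/2$).

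The conclusion then follows from LaSalle's invariance principle applied on $S$. For any $\varepsilon_{1}\in(0,h_{0}^{2}/2)$, the sublevel set $\{V<\varepsilon_{1}\}\cap S$ avoids both the great circle and the non-targeted equilibria; because $\pm h_{0}$ are isolated Lagrange extrema of $H|_{S}$, this sublevel set consists of exactly two disjoint open neighborhoods $U_{+}$, $U_{-}$ of the targeted pair. Each $U_{\pm}$ is positively invariant (since $\dot V\le 0$), which yields Lyapunov stability; and the only invariant subset of $\{\dot V=0\}\cap U_{\pm}$ is the unique targeted equilibrium it contains, so LaSalle delivers the attraction as well. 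The main delicate point is this last step: ruling out the heteroclinic great circle $\{y=0\}\cap S$ from being an $\omega$-limit even though $\dot V$ vanishes there too. This is precisely what the strictly positive gap $V|_{\{y=0\}\cap S}=h_{0}^{2}/2>0$ achieves, quantitatively separating the target from the only non-equilibrium component of $\{\dot V=0\}\cap S$.
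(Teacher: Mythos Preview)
Your argument is correct and follows essentially the same route as the paper: a LaSalle-type argument driven by a Lyapunov function quadratic in $H-h$, with the key computation $\dot V=-\varepsilon(H-h)^{2}\|\nabla C\times\nabla H\|^{2}\le 0$ matching the paper's derivative formula. The only cosmetic difference is that the paper works in an ambient compact neighborhood in $\mathbb{R}^{3}$ with $L=(C-c)^{2}+(H-h)^{2}$, whereas you restrict to the invariant sphere $S$ from the outset and drop the redundant $(C-c)^{2}$ term; your use of the sublevel sets $\{V<\varepsilon_{1}\}\cap S$ to exclude the great circle $\{y=0\}\cap S$ plays exactly the role of the paper's choice of $U_{e_{+}^{|M|}}$ disjoint from $\{y=0\}$.
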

\begin{proof}
First of all, note that (by construction, \eqref{X0}) the equilibrium states $\pm e_{-}^{|M|}, \pm e_{+}^{|M|}$ of the rattleback system \eqref{sys}, are also equilibrium states of the perturbed systems $p_{\pm}$.

The rest of the proof is based on a version of LaSalle's stability criterion (introduced in \cite{metripl}) which states that given a smooth vector field $X\in\mathfrak{X}(\mathbb{R}^n)$ and an equilibrium state $x_0$, if there exists a $\mathcal{C}^1$ real function $L_{x_0}$ defined on some compact neighborhood $U_{x_0}$ of $x_e$ such that $L_{x_0}(x_0)=0$, $L_{x_0}(x)>0$, for all $x\in U_{x_0}\setminus\{x_0\}$, and $\mathcal{L}_{X}{L_{x_0}}(x)\leq 0$, for all $x\in U_{x_0}$, then there exists $V_{x_0}\subset U_{x_0}$ an open neighborhood of $x_0$, such that $\omega(x)\subseteq \mathcal{M}_{x_0}$ for all $x\in V_{x_0}$, where $\mathcal{M}_{x_0}$ is the largest dynamically invariant subset of $\{x\in U_{x_0}:  \mathcal{L}_X L_{x_0}(x)=0\}$; here the notation $\mathcal{L}_X$ stands for the Lie derivative along the vector field $X$.

Using this version of LaSalle's stability criterion, we prove now the conclusion for $\lambda=2n$, $n\in\mathbb{N}\setminus \{0\}$. (The proof of the case when $\lambda$ is odd, follows mimetically.)

\textit{In order to do that, let us show that the perturbation $p_{-}$ asymptotically stabilizes both equilibrium states $e_{+}^{|M|}$ and $e_{-}^{|M|}$.} 

We first prove that $p_{-}$ asymptotically stabilizes the equilibrium $e_{+}^{|M|}$. Let $U_{e_{+}^{|M|}}$ be a compact neighborhood of $e_{+}^{|M|}$ such that $-e_{+}^{|M|}, -e_{-}^{|M|}, e_{-}^{|M|}\notin U_{e_{+}^{|M|}}$ and $U_{e_{+}^{|M|}}\cap\{(x,y,z)\in\mathbb{R}^3: y=0\}=\emptyset$. Let $L_{e_{+}^{|M|}}:U_{e_{+}^{|M|}}\rightarrow \mathbb{R}$ be a smooth function, given by
$$
L_{e_{+}^{|M|}}(x,y,z):=\left[\dfrac{1}{2}(x^2 +y^2 +z^2) - \dfrac{1}{2}(\lambda +1)M^2 \right]^2 +\left[xy^{\lambda} - |M|^{\lambda +1}(\sqrt{\lambda})^{\lambda}\right]^2,
$$
for all $(x,y,z)\in U_{e_{+}^{|M|}}$.

Note that $L_{e_{+}^{|M|}}(x,y,z) =0$ if and only if $$(x,y,z)\in C^{-1}(\{(\lambda+1)M^2 /2\}) \cap H^{-1}(\{|M|^{\lambda+1}(\sqrt{\lambda})^{\lambda}\})\cap U_{e_{+}^{|M|}}.$$ As $C^{-1}(\{(\lambda+1)M^2 /2\}) \cap H^{-1}(\{|M|^{\lambda+1}(\sqrt{\lambda})^{\lambda}\})= \{e_{-}^{|M|},e_{+}^{|M|}\}$ and by definition $e_{-}^{|M|}\notin U_{e_{+}^{|M|}}$ it follows that $L_{e_{+}^{|M|}}(e_{+}^{|M|}) =0$ and moreover $L_{e_{+}^{|M|}}(x,y,z) > 0$, for all $(x,y,z)\in  U_{e_{+}^{|M|}}\setminus \{e_{+}^{|M|}\}$. Consequently, we have checked the first two hypotheses of the above mentioned version of LaSalle's stability criterion.

In order to check the third hypothesis, let us denote by $X_{p_{-}}$ the vector field which generates the perturbed system $p_{-}$. Then, some straightforward computations lead to the following equality, valid for all $(x,y,z)\in U_{e_{+}^{|M|}}$:
\begin{equation*}
\mathcal{L}_{X_{p_{-}}} L_{e_{+}^{|M|}} (x,y,z)=-2\varepsilon y^{2(\lambda -1)}\left[ xy^{\lambda}-|M|^{\lambda +1}(\sqrt{\lambda})^{\lambda}\right]^{2} \left[ \lambda^2 x^2 z^2 + y^2 z^2 +(y^2 -\lambda x^2)^2\right],
\end{equation*}
which implies that $\mathcal{L}_{X_{p_{-}}} L_{e_{+}^{|M|}} (x,y,z)\leq 0,~ \forall (x,y,z)\in U_{e_{+}^{|M|}}$, and hence the last hypothesis is also  verified.

Note that $\mathcal{L}_{X_{p_{-}}} L_{e_{+}^{|M|}} (x,y,z)=0$ if and only if 
$$(x,y,z)\in U_{e_{+}^{|M|}}\cap \left(\{(x,y,z)\in\mathbb{R}^3: y=0\}\cup H^{-1}(\{|M|^{\lambda+1}(\sqrt{\lambda})^{\lambda}\})\cup \mathcal{E}\right),$$
where $\mathcal{E}$ stands for the set of equilibrium states of the rattleback system \eqref{sys}.

From LaSalle's stability criterion and the fact that $C$ is a first integral of the perturbed system $p_{-}$, we get the existence of an open neighborhood of $e_{+}^{|M|}$, $V_{e_{+}^{|M|}}\subset U_{e_{+}^{|M|}}$, such that for every $(x_0 ,y_0 ,z_0)\in V_{e_{+}^{|M|}}\cap C^{-1}(\{(\lambda+1)M^2 /2\})$, the solution $(x(t),y(t),z(t))$ of the perturbed system $p_{-}$, starting from $(x_0 ,y_0 ,z_0)$ at $t=0$, approaches (as $t\rightarrow \infty$) the largest invariant subset of
$$
V_{e_{+}^{|M|}}\cap C^{-1}(\{(\lambda+1)M^2 /2\})\cap \left(\{(x,y,z)\in\mathbb{R}^3: y=0\}\cup H^{-1}(\{|M|^{\lambda+1}(\sqrt{\lambda})^{\lambda}\})\cup \mathcal{E}\right).
$$
As
$$V_{e_{+}^{|M|}}\cap C^{-1}(\{(\lambda+1)M^2 /2\})\cap \left(\{(x,y,z)\in\mathbb{R}^3: y=0\}\cup H^{-1}(\{|M|^{\lambda+1}(\sqrt{\lambda})^{\lambda}\})\cup \mathcal{E}\right)=\{e_{+}^{|M|}\}
$$
it follows that $e_{+}^{|M|}$ is an asymptotically stable equilibrium point of $p_{-}$ with respect to perturbations along the invariant manifold $C^{-1}(\{C(e_{+}^{|M|})\})=C^{-1}(\{(\lambda+1)M^2 /2\})$.
\newline

In order to prove that $e_{-}^{|M|}$ is also an asymptotically stable equilibrium point of $p_{-}$ with respect to perturbations along the invariant manifold $C^{-1}(\{C(e_{-}^{|M|})\})=C^{-1}(\{(\lambda+1)M^2 /2\})$, we use the same approach, the only difference being that this time we choose $U_{e_{-}^{|M|}}$ to be a compact neighborhood of $e_{-}^{|M|}$ such that $e_{+}^{|M|}, -e_{+}^{|M|}, -e_{-}^{|M|}\notin U_{e_{-}^{|M|}}$ and $U_{e_{-}^{|M|}}\cap\{(x,y,z)\in\mathbb{R}^3: y=0\}=\emptyset$. Next, we define a smooth function $L_{e_{-}^{|M|}}:U_{e_{-}^{|M|}}\rightarrow \mathbb{R}$, given by the same formula as the function $L_{e_{+}^{|M|}}$, i.e. 
$$
L_{e_{-}^{|M|}}(x,y,z)=\left[\dfrac{1}{2}(x^2 +y^2 +z^2) - \dfrac{1}{2}(\lambda +1)M^2 \right]^2 +\left[xy^{\lambda} - |M|^{\lambda +1}(\sqrt{\lambda})^{\lambda}\right]^2,
$$
for all $(x,y,z)\in U_{e_{-}^{|M|}}$. The rest of the proof follows mimetically the proof that $e_{+}^{|M|}$ is an asymptotically stable equilibrium point of $p_{-}$ with respect to perturbations along the invariant manifold $C^{-1}(\{C(e_{+}^{|M|})\})=C^{-1}(\{(\lambda+1)M^2 /2\})$.
\newline
\newline

\textit{Now, in order to prove that the perturbation $p_{+}$ asymptotically stabilizes both equilibrium states $-e_{-}^{|M|}$ and $-e_{+}^{|M|}$, we use a similar approach, the main difference being the formula of the Lyapunov function, i.e. this time we use a smooth function given by
$$
(x,y,z)\mapsto \left[\dfrac{1}{2}(x^2 +y^2 +z^2) - \dfrac{1}{2}(\lambda +1)M^2 \right]^2 +\left[xy^{\lambda} + |M|^{\lambda +1}(\sqrt{\lambda})^{\lambda}\right]^2.
$$}
\end{proof}

The phase portraits of the perturbed systems $p_{\pm}$ in accordance with the geometric location of the equilibrium states $- e_{-}^{|M|}, e_{-}^{|M|}, - e_{+}^{|M|}, e_{+}^{|M|}$, are illustrated in Fig. \ref{tabpm}.

\begin{figure}[H]
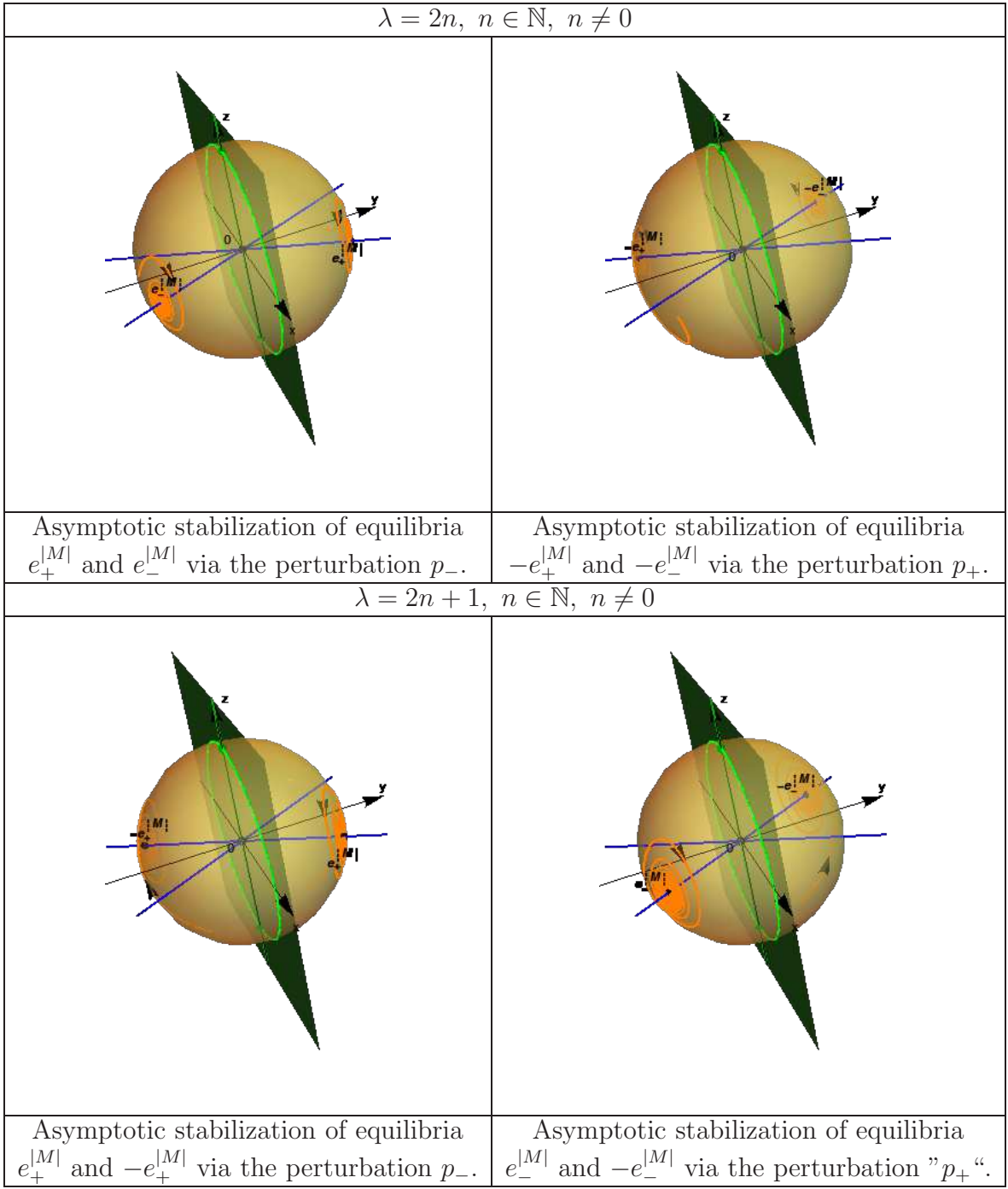

\centering
\vspace*{20pt}
\index{\footnote{}}
\begin{tabular}{|c|c|}
\hline
\multicolumn{2}{|c|}{$\lambda=2n,\ n\in\N,\ n\ne0$}\\
\hline
 & \\
\scalebox{0.7}{%
\includegraphics*{pert_ech_par1.%
eps}} &
\scalebox{0.7}{%
\includegraphics*{pert_ech_par2.%
eps}} \\
 &  \\
\hline
Asymptotic stabilization of equilibria & Asymptotic stabilization of equilibria\\
$e_{+}^{|M|}$ and $e_{-}^{|M|}$ via the perturbation $p_-$. & $-e_{+}^{|M|}$ and $-e_{-}^{|M|}$ via the perturbation $p_+$.\\
\hline
\multicolumn{2}{|c|}{$\lambda=2n+1,\ n\in\N,\ n\ne0$}\\
\hline
 & \\
\scalebox{0.7}{%
\includegraphics*{pert_ech_impar1.%
eps}} &
\scalebox{0.7}{%
\includegraphics*{pert_ech_impar2.%
eps}} \\
 &  \\
\hline
Asymptotic stabilization of equilibria & Asymptotic stabilization of equilibria\\
$e_{+}^{|M|}$ and $-e_{+}^{|M|}$ via the perturbation $p_-$. & $e_{-}^{|M|}$ and $-e_{-}^{|M|}$ via the perturbation "$p_+$``.\\
\hline
\end{tabular}
\caption{\it Phase portraits of the perturbed systems $p_{\pm}$.}\label{tabpm}
\end{figure}

\subsection{Asymptotic stabilization with phase of the periodic orbits of the rattleback system}

In this section we provide a method to stabilize asymptotically with phase, each arbitrary fixed periodic orbit of the rattleback system \eqref{sys}, and in the same time to keep the conservative nature of the dynamics, in the sense that the Casimir invariant $C = \dfrac{1}{2} (x^2 +y^2 +z^2)$ remains a first integral of the perturbed system. Thus, as already mentioned before, the asymptotic stabilization procedure makes sense only for perturbations along the corresponding level sets of $C$.

Before stating the main result of this section, let us recall from Proposition \ref{topostrt} that each point $(h,c)\in\Sigma^{p,-}\cup\Sigma^{p,+}$ corresponds to a pair of periodic orbits of the rattleback system \eqref{sys}, given by $\mathcal{EC}^{-1}(\{(h,c)\})$.

\begin{theorem}\label{spor}
Let $\varepsilon >0$ be a real parameter, and let $(h,c)\in \Sigma^{p,-}\cup\Sigma^{p,+}$. The following perturbation of the rattleback system \eqref{sys} 
\begin{equation*}
p: \left[ \begin{array}{c} \dot{x} \\ \dot{y} \\ \dot{z} \end{array} \right]=\left[ \begin{array}{c}  \lambda xz \\ -yz \\ y^2 -\lambda x^2 \end{array} \right]+\varepsilon \left( xy^{\lambda} - h \right)y^{\lambda -1}\left[ \begin{array}{c} y(\lambda x^2 -y^2 -z^2) \\ x(-\lambda x^2 +y^2 -\lambda z^2) \\ (\lambda +1)xyz \end{array} \right],
\end{equation*}
$(x,y,z)\in V:=\mathbb{R}^{3}\setminus\{\{(x,y,z)\in\mathbb{R}^{3}: y = 0\}\cup\{(M,- M\sqrt{\lambda},0):M\in\mathbb{R}\}\cup\{(M,M\sqrt{\lambda},0):M\in\mathbb{R}\}\cup \{(0,0,M):M\in\mathbb{R}\}\}$, asymptotically stabilizes with phase both periodic orbits, $\mathcal{EC}^{-1}(\{(h,c)\})$, of the rattleback system \eqref{sys}, with respect to perturbations in $V$ along the invariant manifold $C^{-1}(\{c\})$. 
\end{theorem}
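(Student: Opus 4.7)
My plan exploits the fact that the perturbation has exactly the structure
\[
X_p\,=\,X_R\,+\,\varepsilon(H-h)\bigl[\nabla C\times(\nabla C\times\nabla H)\bigr],
\]
i.e.\ formula \eqref{X0} with proportionality function $\varepsilon(H-h)$. Two consequences are immediate: since $\nabla C\times(\nabla C\times\nabla H)\perp\nabla C$, we have $\mathcal{L}_{X_p}C=\mathcal{L}_{X_R}C=0$, so $C$ is still a first integral of $p$; and since the scalar factor $H-h$ vanishes on $\mathcal{EC}^{-1}(\{(h,c)\})$, the perturbed field coincides there with $X_R$, so by Proposition \ref{topostrt} the two periodic orbits $\gamma_+\cup\gamma_-=\mathcal{EC}^{-1}(\{(h,c)\})$ remain invariant under $p$ with unchanged period and orientation. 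It therefore suffices to establish orbital attractivity with phase for each $\gamma_\pm$ separately, working inside the two-dimensional symplectic leaf $C^{-1}(\{c\})\cap\{\pm y>0\}$.

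The Lyapunov candidate is $L:=\tfrac{1}{2}(H-h)^2$. Using $\mathcal{L}_{X_R}H=0$ together with the Lagrange identity $\langle\nabla H,\nabla C\times(\nabla C\times\nabla H)\rangle=-\|\nabla C\times\nabla H\|^2$, one directly obtains
\[
\mathcal{L}_{X_p}L\,=\,-\,\varepsilon\,(H-h)^2\,\|\nabla C\times\nabla H\|^2.
\]
Since $\nabla C\times\nabla H=-y^{\lambda-1}X_R$, the multiplier equals $y^{2(\lambda-1)}\|X_R\|^2$, which is strictly positive on $V$ (there $y\neq0$ and we are away from the equilibria), so $\mathcal{L}_{X_p}L<0$ off $\gamma_\pm$. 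A LaSalle argument on a compact tubular neighborhood $U_\pm\subset V\cap C^{-1}(\{c\})$ of $\gamma_\pm$ then shows that the $\omega$-limits of trajectories starting in a smaller neighborhood lie in the largest invariant subset of $\{L=0\}\cap U_\pm$, which equals $\gamma_\pm$ itself; this yields orbital asymptotic stability of $\gamma_\pm$ with respect to perturbations in $V$ along $C^{-1}(\{c\})$.

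To upgrade orbital attractivity to asymptotic stability \emph{with phase}, I pass to a Poincar\'e first-return map. Because $(h,c)\in\Sigma^{p,\pm}$ is a regular value of $\mathcal{EC}$, the gradients $\nabla H$ and $\nabla C$ are linearly independent along $\gamma_\pm$; hence on any smooth transversal $\Sigma$ to $\gamma_\pm$ inside the leaf, $L|_\Sigma$ has a non-degenerate quadratic minimum at the fixed point $p_\ast\in\Sigma\cap\gamma_\pm$. The return map $P\colon\Sigma\cap W_\pm\to\Sigma$ is a smooth one-dimensional self-map fixing $p_\ast$, and the strict pointwise inequality $L(P(q))<L(q)$ for $q\neq p_\ast$, combined with the quadratic non-degeneracy of $L|_\Sigma$, forces $|P'(p_\ast)|<1$. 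This hyperbolic contraction is equivalent, by the classical theory of planar limit cycles, to $\gamma_\pm$ being asymptotically stable with asymptotic phase. The principal obstacle is exactly this last step: orbital stability alone yields only $|P'(p_\ast)|\le 1$, and the strict inequality is precisely what produces the asymptotic phase. The key is the quadratic non-degeneracy of $L|_\Sigma$ at $p_\ast$, which in turn rests on $(h,c)$ lying in the principal stratum; together with $L\circ P<L$, a second-order Taylor expansion at $p_\ast$ then extracts the needed contraction estimate.
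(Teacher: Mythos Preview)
Your argument up through orbital asymptotic stability is correct and is essentially the content of the criterion from \cite{tudoranPO} that the paper simply invokes as a black box: the paper does not re-prove anything, it just checks that $(h,c)$ is a regular value of $\mathcal{EC}$, that $c$ is a regular value of $C$, that $\mathcal{EC}^{-1}(\{(h,c)\})\subset V$, and then applies the cited theorem with $\alpha\equiv\varepsilon$. So on that part you are doing more work than the paper, but legitimately so, via the Lyapunov function $L=\tfrac12(H-h)^2$ and LaSalle.

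The gap is in your upgrade to asymptotic stability \emph{with phase}. The implication you assert---that the strict pointwise inequality $L(P(q))<L(q)$ together with quadratic non-degeneracy of $L|_\Sigma$ forces $|P'(p_\ast)|<1$---is false. In a local coordinate on $\Sigma$ with $p_\ast=0$ and $L(q)=q^2$, the map $P(q)=q-q^{3}$ satisfies $L(P(q))=q^{2}(1-q^{2})^{2}<q^{2}=L(q)$ for all small $q\neq 0$, yet $P'(0)=1$. A second-order Taylor expansion of $L\circ P$ versus $L$ at $p_\ast$ yields only $|P'(p_\ast)|\le 1$, never the strict inequality; the phase conclusion does not follow from those ingredients alone.

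The repair is already implicit in your own computation of $\mathcal L_{X_p}L$. Since $\dot L=-2\varepsilon\,\|\nabla C\times\nabla H\|^{2}\,L$ and $\|\nabla C\times\nabla H\|^{2}\ge m>0$ on a compact tubular neighborhood of $\gamma_\pm$ in $V$, you get the Gronwall estimate $L(t)\le L(0)\,e^{-2\varepsilon m t}$ along trajectories. Combined with the quadratic non-degeneracy of $L$ transverse to $\gamma_\pm$ on the leaf, this gives exponential orbital convergence, hence a nontrivial Floquet multiplier strictly inside the unit circle, hence asymptotic phase by the standard theory. Use this quantitative decay, not the bare pointwise inequality $L\circ P<L$.
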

\begin{proof}
First of all, note that (by construction, \eqref{X0}) both periodic orbits, $\mathcal{EC}^{-1}(\{(h,c)\})$, of the rattleback system \eqref{sys}, are also periodic orbits of the perturbed system $p$.

The rest of the proof follows directly from a stability criterion given in \cite{tudoranPO} that concerns Hamiltonian dynamical systems of the type
\begin{equation}\label{EPR}
\dot{u}=\nu(u)\left(\nabla H(u)\times \nabla C(u)\right),
\end{equation}
defined on the Poisson manifold $(\Omega,\nu\Pi_{C})$, where $\Omega\subseteq \mathbb{R}^3$ is an open set, and $H,C,\nu \in C^{\infty}(\Omega,\mathbb{R})$ are smooth real functions, such that $H$ and $C$ are functionally independent on a non-empty open subset $V\subseteq \Omega$. 

More precisely, the criterion works as follows. Assume there exists $\Gamma\subset V$ is a periodic orbit of \eqref{EPR}. If $\Gamma \subseteq (H,C)^{-1}(\{(h,c)\})$, where $(h,c)\in\mathbb{R}^2$ is a regular value for the map $(H,C):\Omega\rightarrow \mathbb{R}^2$, then the following conclusion holds true.

If $c$ is a regular value of the map $C:\Omega\rightarrow \mathbb{R}$, then for every smooth function $\alpha \in C^{\infty}(V,(0,\infty))$, 
$\Gamma$, as a periodic orbit of the perturbed dynamical system
\begin{align*}
\dfrac{\mathrm{d}u}{\mathrm{d}t} =\nu(u)\left(\nabla H(u)\times \nabla C(u)\right)-\alpha(u)(H(u)-h)\left[\nabla C(u)\times \left(\nabla H(u)\times \nabla C(u)\right)\right],
\end{align*}
$u\in V$, is orbitally asymptotically stable with phase, with respect to perturbations in $V$, along the invariant manifold $C^{-1}(\{c\})$.

In order to apply the above stabilization criterion, let us recall first from Theorem \ref{t22} that the rattleback system \eqref{sys} is a Hamiltonian system of type \eqref{EPR} where:
\begin{align*}
 \nu (x,y,z) &:= \dfrac{1}{y^{\lambda -1}} ,~\forall (x,y,z)\in \Omega:=\{(x,y,z)\in \mathbb{R}^{3}: y\neq 0 \},\\
 H(x,y,z)&:= xy^{\lambda},~\forall (x,y,z)\in \Omega,\\
 C(x,y,z)&:=\dfrac{1}{2}\left(x^2 +y^2 +z^2 \right),~\forall (x,y,z)\in \Omega.
\end{align*}
Note that the maximal set where $H$ and $C$ are functionally independent is the open set
\begin{align*}
V:=\mathbb{R}^{3}\setminus\{\{(x,y,z)\in\mathbb{R}^{3}: y = 0\} & \cup\{(M,- M\sqrt{\lambda},0):M\in\mathbb{R}\}\\
&\cup\{(M,M\sqrt{\lambda},0):M\in\mathbb{R}\}\cup \{(0,0,M):M\in\mathbb{R}\}\}.
\end{align*}
Now the conclusion follows from the above mentioned stabilization criterion, by setting $\alpha(x,y,z):=\varepsilon, ~\forall (x,y,z)\in V$, and taking into account that for every $(h,c)\in \Sigma^{p,-}\cup\Sigma^{p,+}$, we have that $(h,c)$ is a regular value of $(H,C)=\mathcal{EC}$, $c\neq 0$ (hence $c$ is a regular value of $C$), and $\mathcal{EC}^{-1}(\{(h,c)\})\subset V$ (i.e. both periodic orbits $\mathcal{EC}^{-1}(\{(h,c)\})$ are contained in $V$).  

In order to complete the proof, note that from the relation \eqref{X0} we get
\begin{align*}
&-\alpha (x,y,z) (H(x,y,z)-h) \left[\nabla C(x,y,z) \times (\nabla H(x,y,z) \times \nabla C(x,y,z))\right]\\
&=\varepsilon  (xy^{\lambda}-h) \left[ \nabla C(x,y,z) \times (\nabla C(x,y,z) \times \nabla H(x,y,z))\right]\\
&=\varepsilon  (xy^{\lambda}-h) y^{\lambda-1} \left[ \begin{array}{c} y(\lambda x^2 -y^2 -z^2) \\ x(-\lambda x^2 +y^2 -\lambda z^2) \\ (\lambda +1)xyz \end{array} \right], ~\forall (x,y,z)\in V.
\end{align*}
\end{proof}

\begin{remark}
The stabilization procedure given in Theorem \ref{spor} still holds true if we replace the parameter $\varepsilon >0$ by any smooth function $\alpha \in C^{\infty}(V,(0,\infty))$.
\end{remark}

The phase portrait of the perturbed system $p$ in accordance with the geometric location of the pair of periodic orbits $\mathcal{EC}^{-1}(\{(h,c)\})$, $(h,c)\in \Sigma^{p,-}\cup\Sigma^{p,+}$, is illustrated in the Fig. \ref{tabp}. As in the case of equilibria, the geometric location of the periodic orbits depends on the parity of $\lambda$.

\begin{figure}[H]
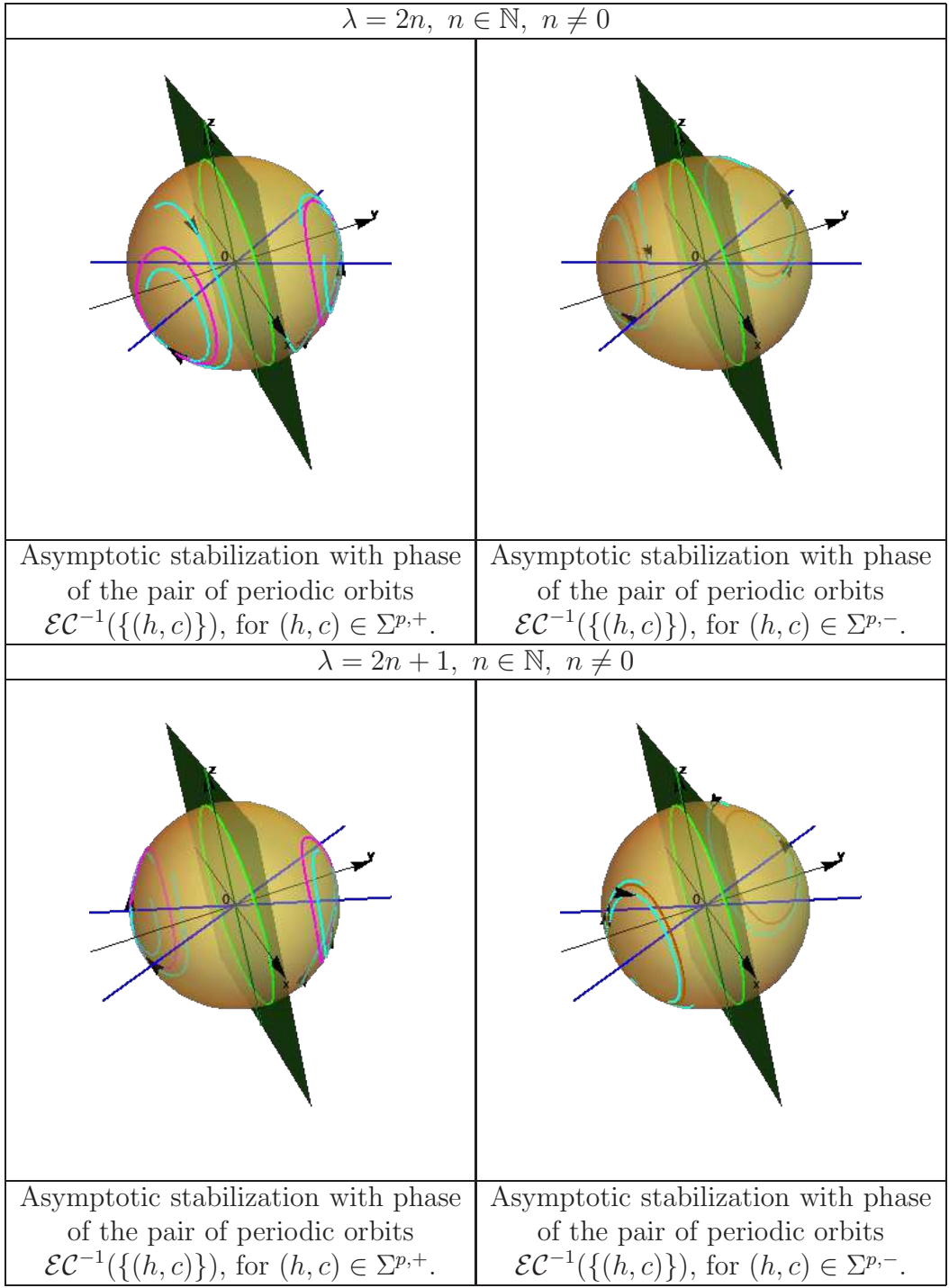

\centering
\vspace*{20pt}
\index{\footnote{}}
\begin{tabular}{|c|c|}
\hline
\multicolumn{2}{|c|}{$\lambda=2n,\ n\in\N,\ n\ne0$}\\
\hline
 & \\
\scalebox{0.7}{%
\includegraphics*{pert_orb_par1.%
eps}} &
\scalebox{0.7}{%
\includegraphics*{pert_orb_par2.%
eps}} \\
 &  \\
\hline
Asymptotic stabilization with phase   & Asymptotic stabilization with phase  \\
of the pair of periodic orbits  & of the pair of periodic orbits\\
 $\mathcal{EC}^{-1}(\{(h,c)\})$, for $(h,c)\in\Sigma^{p,+}$. & $\mathcal{EC}^{-1}(\{(h,c)\})$, for $(h,c)\in\Sigma^{p,-}$.\\
\hline
\multicolumn{2}{|c|}{$\lambda=2n+1,\ n\in\N,\ n\ne0$}\\
\hline
 & \\
\scalebox{0.7}{%
\includegraphics*{pert_orb_impar1.%
eps}} &
\scalebox{0.7}{%
\includegraphics*{pert_orb_impar2.%
eps}} \\
 &  \\
\hline
Asymptotic stabilization with phase   & Asymptotic stabilization with phase  \\
of the pair of periodic orbits  & of the pair of periodic orbits\\
 $\mathcal{EC}^{-1}(\{(h,c)\})$, for $(h,c)\in\Sigma^{p,+}$. & $\mathcal{EC}^{-1}(\{(h,c)\})$, for $(h,c)\in\Sigma^{p,-}$.\\
\hline
\end{tabular}
\caption{\it Phase portrait of the perturbed system $p$.}\label{tabp}
\end{figure}

\subsection{Asymptotic stabilization of the heteroclinic cycles of the rattleback system}

The aim of this section is to provide a method to stabilize asymptotically the heteroclinic cycles generated by pairs of antipodal unstable equilibrium states of the rattleback system \eqref{sys}, and also to keep the conservative nature of the dynamics, in the sense that the Casimir invariant $C = \dfrac{1}{2} (x^2 +y^2 +z^2)$ remains a first integral of the perturbed system. Thus, as already mentioned before, the asymptotic stabilization procedure makes sense only for perturbations along the corresponding level sets of $C$.

Before stating the main result of this section, let us recall from Proposition \ref{topostrt} that each point $(h,c)\in\Sigma^{u}$ corresponds to a pair of unstable equilibrium states connected by four heteroclinic orbits of the rattleback system \eqref{sys}, i.e. all together form the invariant set $\mathcal{EC}^{-1}(\{(h,c)\})$. Recall also from Theorem \ref{stability} that each unstable equilibrium state is given by $(0,0,M)$ for some $M\in\mathbb{R}\setminus\{0\}$.

Consequently, on each Casimir level set, $C^{-1}(\{M^2 /2\})$, $M\neq 0$, there exist exactly two unstable equilibrium states of the rattleback system \eqref{sys} (i.e. the antipodal points $-e_{3}^{|M|}$, and $e_{3}^{|M|}$, where $e_{3}^{|M|}:=(0,0,|M|)$) connected by four heteroclinic orbits. Note that, any two heteroclinic orbits together with the equilibrium states they connect (i.e. $-e_{3}^{|M|}$, $e_{3}^{|M|}$), form a heteroclinic cycle, located on the invariant sphere $C^{-1}(\{M^2 /2\})$.

In the following, for each pair of antipodal unstable equilibrium states of the rattleback system, $\{-e_{3}^{|M|}, e_{3}^{|M|}\}$, we construct a perturbation which stabilize asymptotically the heteroclinic cycles generated by $\{-e_{3}^{|M|}, e_{3}^{|M|}\}$, with respect to perturbations along the invariant manifold $C^{-1}(\{M^2 /2\})$. As the perturbations we consider are taken along the whole sphere $C^{-1}(\{M^2 /2\})$, the only candidate to asymptotic stabilization is the invariant set consisting of all heteroclinic cycles generated by $\{-e_{3}^{|M|}, e_{3}^{|M|}\}$, i.e. the set $\mathcal{EC}^{-1}(\{(0,M^2 /2)\})$. In contrast to the perturbations used in order to stabilize asymptotically the Lyapunov stable equilibrium states, and the periodic orbits of the rattleback system, this time the perturbation is not depending on $M$. Thus, the same perturbation stabilize asymptotically each invariant set $\mathcal{EC}^{-1}(\{(0,M^2 /2)\})$, for $M\in\mathbb{R}\setminus\{0\}$. Let us state now the main result of this section.

\begin{theorem}
Let $\varepsilon >0$ be a real parameter and let $M\in\mathbb{R}\setminus\{0\}$. The following perturbation of the rattleback system \eqref{sys} 
\begin{equation*}
p_{het}: \left[ \begin{array}{c} \dot{x} \\ \dot{y} \\ \dot{z} \end{array} \right]=\left[ \begin{array}{c}  \lambda xz \\ -yz \\ y^2 -\lambda x^2 \end{array} \right]+\varepsilon xy^{2\lambda -1}\left[ \begin{array}{c} y(\lambda x^2 -y^2 -z^2) \\ x(-\lambda x^2 +y^2 -\lambda z^2) \\ (\lambda +1)xyz \end{array} \right],~(x,y,z)\in\mathbb{R}^{3},
\end{equation*}
asymptotically stabilizes the set of heteroclinic cycles of the rattleback system \eqref{sys} generated by the unstable equilibrium states $-e_{3}^{|M|}, e_{3}^{|M|}$, with respect to perturbations along the invariant manifold $C^{-1}(\{M^2 /2\})$. 
\end{theorem}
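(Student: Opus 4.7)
The plan is to adapt the LaSalle-style argument from Subsections~9.1--9.2 to the compact invariant set
\begin{equation*}
\mathcal{T}_M:=\mathcal{EC}^{-1}(\{(0,M^{2}/2)\})=\{H=0\}\cap S_M,\qquad S_M:=C^{-1}(\{M^{2}/2\}),
\end{equation*}
which is the union of the two great circles $\{x=0\}\cap S_M$ and $\{y=0\}\cap S_M$, meeting at the two unstable equilibria $\pm e_{3}^{|M|}$ and whose four open arcs are the heteroclinic orbits of Section~7. The first step is to recognize the perturbation: a direct check shows that the added term in $p_{het}$ coincides with $\varepsilon\,H\cdot\bigl[\nabla C\times(\nabla C\times\nabla H)\bigr]$, i.e. the general scheme \eqref{X0} with $h=0$ and proportionality constant $\varepsilon$. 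Two structural consequences are immediate: since $\nabla C$ is orthogonal to every cross product containing it, $\mathcal{L}_{X_{p_{het}}}C=0$, so $C$ remains a first integral and $S_M$ is invariant; and since the perturbation carries the factor $H$, it vanishes on $\{H=0\}$, which combined with the invariance of $\{H=0\}$ under the unperturbed flow shows that $\mathcal{T}_M$ is invariant under $X_{p_{het}}$.

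The Lyapunov function candidate is
\begin{equation*}
L(x,y,z):=\left[\tfrac{1}{2}(x^{2}+y^{2}+z^{2})-\tfrac{1}{2}M^{2}\right]^{2}+(xy^{\lambda})^{2},
\end{equation*}
defined on a compact neighborhood $U\subset\mathbb{R}^{3}$ of $\mathcal{T}_M$ chosen to exclude the four Lyapunov stable equilibria $\pm e_{\pm}^{|M|}$. Clearly $L\geq 0$, with $L(x,y,z)=0$ if and only if $C(x,y,z)=M^{2}/2$ and $H(x,y,z)=0$, i.e. iff $(x,y,z)\in\mathcal{T}_M$. Using $\mathcal{L}_{X_{p_{het}}}C=0$ together with the Lagrange identity
\begin{equation*}
\nabla H\cdot\bigl[\nabla C\times(\nabla C\times\nabla H)\bigr]=(\nabla C\cdot\nabla H)^{2}-\|\nabla C\|^{2}\|\nabla H\|^{2}=-\|\nabla C\times\nabla H\|^{2},
\end{equation*}
the Lie derivative of $L$ simplifies to
\begin{equation*}
\mathcal{L}_{X_{p_{het}}}L(x,y,z)=-2\varepsilon\,H(x,y,z)^{2}\,\|\nabla C(x,y,z)\times\nabla H(x,y,z)\|^{2}\leq 0.
\end{equation*}

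To finish I apply LaSalle's invariance principle. Computing $\nabla C\times\nabla H=y^{\lambda-1}(-\lambda xz,\,yz,\,\lambda x^{2}-y^{2})$ one sees that its zero set equals $\{y=0\}\cup\mathcal{E}^{s,\star}$, so the vanishing locus of $\mathcal{L}_{X_{p_{het}}}L$ is $\{H=0\}\cup\{y=0\}\cup\mathcal{E}^{s,\star}$. Intersecting with $U\cap S_M$, the component $\mathcal{E}^{s,\star}$ is excluded by the choice of $U$, while $\{y=0\}\cap S_M\subset\mathcal{T}_M$, so
\begin{equation*}
\{\mathcal{L}_{X_{p_{het}}}L=0\}\cap U\cap S_M=\mathcal{T}_M\cap U,
\end{equation*}
which is its own largest invariant subset. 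Hence every trajectory of $X_{p_{het}}$ starting in $S_M$ sufficiently close to $\mathcal{T}_M$ has $\omega$-limit contained in $\mathcal{T}_M$, yielding the asymptotic stabilization along $C^{-1}(\{M^{2}/2\})$. The main technical subtlety is that the version of LaSalle recalled in Subsection~9.1 was stated for a single equilibrium, while the target here is a compact invariant curve; this is handled by using the compact sublevel sets $\{L\leq\delta\}\cap S_M$ as a basis of forward-invariant neighborhoods of $\mathcal{T}_M$ (by $\mathcal{L}_{X_{p_{het}}}L\leq 0$) and running the classical argument on each of them, where the conclusion is uniform.
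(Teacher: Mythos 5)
Your proof is correct and takes essentially the same route as the paper: a LaSalle argument with Lyapunov function equal (on the invariant sphere) to $H^{2}=x^{2}y^{2\lambda}$, forward-invariant compact sublevel sets excluding the four stable equilibria (the paper's $U_{\beta}$ with $\beta<\beta_{0}$), and identification of the largest invariant subset of $\{\mathcal{L}L=0\}$ with $\mathcal{EC}^{-1}(\{(0,M^{2}/2)\})$. Your added term $\left(C-M^{2}/2\right)^{2}$ contributes nothing to the Lie derivative, and your Lagrange-identity computation of $\mathcal{L}_{X_{p_{het}}}L=-2\varepsilon H^{2}\|\nabla C\times\nabla H\|^{2}$ reproduces exactly the paper's explicit formula \eqref{hetcon}.
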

\begin{proof}
First of all, note that (by construction, \eqref{X0}) each dynamical element of $\mathcal{EC}^{-1}(\{(0,M^2 /2)\})$, preserves its nature when seen as a dynamical element of the perturbed system $p_{het}$.

The rest of the proof follows from the classical LaSalle's stability criterion. In order to apply it, let $L:\mathbb{R}^3 \rightarrow \mathbb{R}$ be a smooth function given by $L(x,y,z):=x^2 y^{2\lambda},~ \forall (x,y,z)\in\mathbb{R}^3$. Denoting by $X_{het}$ the vector field which generates the perturbed system $p_{het}$, after some straightforward computations we obtain the following relation
\begin{align}\label{hetcon}
\mathcal{L}_{X_{het}}L(x,y,z)= -2\varepsilon x^2 y^{2(2\lambda -1)}\left[(y^2-\lambda x^2)^2 +y^2 z^2 + \lambda ^2 x^2 z^2 \right] \leq 0, ~\forall (x,y,z)\in\mathbb{R}^3.
\end{align} 
Using the above inequality and the fact that $C=\dfrac{1}{2}(x^2 +y^2 +z^2)$ is a first integral of $X_{het}$, we obtain that for every $\beta >0$, the set
$$
U_{\beta}:=\left\{(x,y,z)\in\mathbb{R}^3 : x^2 y^{2\lambda}\leq \beta\right\}\cap\left\{(x,y,z)\in\mathbb{R}^3 : x^2 +y^2 +z^2 \leq M^2 \right\},
$$
is compact and positively invariant. 

Denoting $\beta_0:= \dfrac{M^{2(\lambda +1)}\lambda^\lambda}{(\lambda +1)^{\lambda +1}}$, then for every $\beta \in (0,\beta_0)$, the set $U_{\beta}$ does not contain the equilibrium states $\pm e_{-}^{|M^{\prime}|}, \pm e_{+}^{|M^{\prime}|} \in C^{-1}(\{M^2 /2\})$, where $M^{\prime}:=\dfrac{M}{\sqrt{\lambda +1}}$.

Let us fix now some $\beta \in (0,\beta_0)$. Using LaSalle's stability criterion and taking into account that $C$ is a first integral of $X_{het}$, it follows that for every $(x_0,y_0,z_0)\in C^{-1}(\{M^2 /2\})\cap U_{\beta}$, the solution $(x(t),y(t),z(t))$ of the perturbed system $p_{het}$ starting from $(x_0,y_0,z_0)$ at $t=0$, approaches (as $t\rightarrow \infty$) the largest dynamically invariant subset of 
$$
\left\{ (x,y,z)\in U_{\beta} \cap C^{-1}(\{M^2 /2\}): \mathcal{L}_{X_{het}}L(x,y,z)=0\right\}.
$$
Using the relation \eqref{hetcon}, it follows that 
$$
\mathcal{L}_{X_{het}}L(x,y,z) =0 \Leftrightarrow (x,y,z)\in \{(x,y,z)\in\mathbb{R}^3 : x=0\}\cup \{(x,y,z)\in\mathbb{R}^3 : y=0\} \cup \mathcal{E},
$$
where $\mathcal{E}$ stands for the set of equilibrium states of the rattleback system \eqref{sys}.

As $\beta \in (0,\beta_0)$, the only equilibrium states that belong to $U_{\beta} \cap C^{-1}(\{M^2 /2\})$ are $-e_{3}^{|M|}$ and $ e_{3}^{|M|}$, and consequently, the largest dynamically invariant subset of 
$$
\left\{ (x,y,z)\in U_{\beta} \cap C^{-1}(\{M^2 /2\}): \mathcal{L}_{X_{het}}L(x,y,z)=0\right\}
$$
is precisely the set of heteroclinic cycles generated by the equilibria $-e_{3}^{|M|}$ and $e_{3}^{|M|}$, i.e. $\mathcal{EC}^{-1}(\{(0,M^2 /2)\})$.

Note that for every $\beta_1, \beta_2 \in (0,\beta_0)$ such that $\beta_1 < \beta_2$, we obtain
$$
C^{-1}(\{M^2 /2\})\cap U_{\beta_1} \subset C^{-1}(\{M^2 /2\})\cap U_{\beta_2}.
$$
Moreover, for $\beta =0$ we get that
\begin{align*}
C^{-1}(\{M^2 /2\})\cap U_{0} &=C^{-1}(\{M^2 /2\})\cap\left(\{(x,y,z)\in\mathbb{R}^3 : x=0\}\cup \{(x,y,z)\in\mathbb{R}^3 : y=0\}\right) \\
&=\mathcal{EC}^{-1}(\{(0,M^2 /2)\})\subset C^{-1}(\{M^2 /2\})\cap U_{\beta},~\forall \beta\in (0,\beta_0),
\end{align*}
whereas for $\beta=\beta_0$ we obtain that
$$
C^{-1}(\{M^2 /2\})\cap U_{\beta_0}=C^{-1}(\{M^2 /2\})\supset \{-e_{-}^{|M^{\prime}|},e_{-}^{|M^{\prime}|},-e_{+}^{|M^{\prime}|},e_{+}^{|M^{\prime}|}\}, ~ \text{where} ~ M^{\prime}:=\dfrac{M}{\sqrt{\lambda +1}}.
$$
Summarizing, we have shown that given any $\beta \in [0,\beta_0)$, then for every $(x_0,y_0,z_0)\in C^{-1}(\{M^2 /2\})\cap U_{\beta}$, the solution $(x(t),y(t),z(t))$ of the perturbed system $p_{het}$ starting from $(x_0,y_0,z_0)$ at $t=0$, approaches $\mathcal{EC}^{-1}(\{(0,M^2 /2)\})$ as $t\rightarrow \infty$.
\end{proof}

The phase portrait of the perturbed system $p_{het}$ is illustrated in the Fig. \ref{tabhet}.

\begin{figure}[H]
\vspace*{25pt}
\centering
\begin{tabular}{|c|}
\hline
\scalebox{0.7}{\includegraphics*{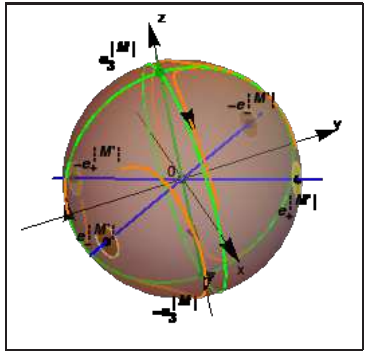}}\\
\hline
\end{tabular}
\caption{\it Phase portrait of the perturbed system $p_{het}$.}\label{tabhet}
\end{figure}


\bigskip
\bigskip

\noindent {\sc R.M. Tudoran}\\
West University of Timi\c soara,\\
Faculty of Mathematics and Computer Science,\\
Department of Mathematics,\\
Blvd. Vasile P\^arvan, No. 4,\\
300223-Timi\c soara, Rom\^ania.\\
E-mail: {\sf razvan.tudoran@e-uvt.ro}\\
\medskip

\noindent {\sc A. G\^\i rban}\\
"Politehnica" University of Timi\c soara,\\
Department of Mathematics, \\
Pia\c ta Victoriei, No. 2,\\
300006-Timi\c soara, Rom\^ania.\\
E-mail: {\sf anania.girban@gmail.com}
\medskip

\end{document}